\numberwithin{equation}{section}
\newtheorem{definition}[equation]{Definition}
\newtheorem{theorem}[equation]{Theorem}
\newtheorem*{theorem*}{Theorem}
\newtheorem{lemma}[equation]{Lemma}
\newtheorem{proposition}[equation]{Proposition}
\newtheorem{corollary}[equation]{Corollary}
\theoremstyle{definition}
\newtheorem{example}[equation]{Example}
\theoremstyle{remark} 
\newtheorem{remark}[equation]{Remark}
\newcommand{\A}{\mathcal{A}}
\begin{document} 
	\title{Separator-based derivations of graphic arrangements}
	\author{Leonie Mühlherr} 
	\address{Leonie Mühlherr, Universit\"at Bielefeld, Fakult\"at f\"ur Mathematik, Bielefeld, Germany}
	\email{lmuehlherr@math.uni-bielefeld.de}
	\begin{abstract}The class of subarrangements of the well-studied braid arrangement, so-called graphic hyperplane arrangements, is important for analysing new concepts and properties in hyperplane arrangement theory. While there is a nice characterization of free graphic arrangements, many interesting questions beyond the free case remain open. This paper introduces an explicit set of generators for the module of logarithmic derivations of a general graphic arrangement based on graph separators. We obtain new insights in the derivation degree sequence in the non-free case and give bounds on the highest degree in the sequence. Moreover, this can broadly be used for future research in this area.
		
	\end{abstract} 
	\maketitle
	\section{Introduction}
	A hyperplane arrangement $\A$ in an $\ell$-dimensional vector space $W$ over a field $\mathbb{K}$ is a finite collection of subspaces of dimension $\ell-1$. It can be studied through various lenses, e.g., its combinatorial structure given by the intersection lattice $L(\A)$ of the hyperplanes or its algebraic side by analysing the module of logarithmic derivations (or $\A$-derivations) $D(\A)$. The arrangement $\A$ is called free if $D(\A)$ is free as a module. A long-standing open question in this research area is Terao's conjecture. It states that over a fixed field $\mathbb{K}$ the freeness of an arrangement is only dependent on its intersection lattice $L(\A)$, i.e., its combinatorial structure. In this work, we are particularly interested in hyperplane arrangements arising from simple graphs: 
	
	\begin{definition} \label{def:graph_arr} 
		Let $\mathbb{K}$ be a field and let $W = \mathbb{K}^\ell$. Let $x_1,\dots,x_\ell$ be a basis for the dual space $W^*$. Given a simple, undirected graph $G = (V,E)$ with vertex set $V = \{1,\dots, \ell\}$, define the \emph{graphic arrangement $\mathcal{A}(G)$} by \[\mathcal{A}(G) = \{\ker(x_i-x_j) ~\vert~ \{i,j\}\in E\}.\]
	\end{definition}  
	In this setting results from graph theory are applicable to derive properties of the arrangements. The graphic arrangement corresponding to the complete graph $K_\ell$ on $\ell$ vertices is the well-studied braid arrangement.
	Thus, graphic arrangements parametrize all subarrangements of the braid arrangement. 
	Moreover, the braid arrangement arises in physics in the context of the moduli space $\mathcal{M}_{0,n}$, which can be modelled as its complement space \cite{early}.

	The freeness and related properties of graphic arrangements are well-researched. 
	A result by Stanley, Edelman and Reiner equates the class of chordal graphs (graphs which do not have an induced cycle of length greater than three) with the class of free graphic hyperplane arrangements  \autocite{edelman}. More recently, Tran and Tsujie proved that the class of strongly chordal graphs corresponds to the class of MAT-free graphic arrangements \autocite{tran2022matfree}. In previous work with Abe, Kühne and Mücksch, we proved that the class of weakly chordal graphs corresponds to graphic arrangements with projective dimension at most 1, the latter being defined again as the projective dimension of $D(\A)$  \autocite{abe2023projective}. The proof of this result partially relied on finding explicit generators for $D(\A(\overline{C_\ell}))$ for $\overline{C_\ell}$ being the $\ell$-antihole, i.e. the complement of the cycle graph $C_\ell$. \par 
	
	This present work initiates the study of an explicit and systematic way of describing the generators of $D(\A(G))$ for arbitrary graphs. The starting point is the following result for the complete graph $K_\ell$, which goes back to Saito \cite{saito}. Denote by $\partial_{x_i}, 1 \leq i \leq \ell$ the partial derivatives. 
	\begin{theorem*}[Saito]\label{thm:saito_braid} 
		Define for $k \in \mathbb{N}_0$ the derivation $\theta_k = \sum_{i =1}^\ell x_i^k \cdot \partial_{x_i}$, then $\theta_0,\dots, \theta_{\ell-1}$ is a basis of the module $D(\A(K_\ell))$. 
	\end{theorem*}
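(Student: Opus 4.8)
The plan is to apply Saito's criterion for freeness, which for an arrangement in an $\ell$-dimensional space asserts that $\ell$ derivations $\eta_1,\dots,\eta_\ell \in D(\A)$ form a basis of $D(\A)$ as soon as the determinant of the coefficient matrix $M = (\eta_i(x_j))_{i,j}$ is a nonzero scalar multiple of the defining polynomial $Q = \prod_{i<j}(x_i - x_j)$ of the braid arrangement. This reduces the theorem to two verifications: that each $\theta_k$ actually lies in $D(\A(K_\ell))$, and that the associated coefficient determinant matches $Q$ up to a unit.

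First I would confirm membership. A derivation $\theta = \sum_i f_i\,\partial_{x_i}$ belongs to $D(\A(K_\ell))$ precisely when $(x_i - x_j)$ divides $\theta(x_i - x_j)$ for every edge $\{i,j\}$ of $K_\ell$. For $\theta_k$ this condition reads $\theta_k(x_i - x_j) = x_i^k - x_j^k$, and the elementary factorization
\[
x_i^k - x_j^k = (x_i - x_j)\sum_{m=0}^{k-1} x_i^{m} x_j^{k-1-m}
\]
exhibits the required divisibility for every $k \ge 1$; the case $k=0$ is immediate since $\theta_0(x_i - x_j) = 0$. Hence $\theta_0,\dots,\theta_{\ell-1} \in D(\A(K_\ell))$.

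Second I would compute the coefficient matrix. Since $\theta_k(x_j) = x_j^k$, the matrix $M$ with rows indexed by $k = 0,\dots,\ell-1$ and columns by $j = 1,\dots,\ell$ is exactly the Vandermonde matrix $(x_j^{\,k})$. Its determinant is the Vandermonde product $\prod_{1 \le i < j \le \ell}(x_j - x_i)$, which equals $Q$ up to the nonzero sign $(-1)^{\binom{\ell}{2}}$. Saito's criterion then yields at once that $\theta_0,\dots,\theta_{\ell-1}$ is a basis of $D(\A(K_\ell))$, and in particular that the braid arrangement is free with exponents $0,1,\dots,\ell-1$.

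The argument is short precisely because all of the content is packaged inside Saito's criterion; the only genuine computation is recognizing the Vandermonde determinant. Accordingly, the main obstacle is not visible in this proof but lies in having the criterion available: its hard direction is what guarantees that logarithmic derivations whose coefficient determinant meets $Q$ in degree actually \emph{generate} the whole module rather than merely embedding into it. Were one to avoid invoking the criterion, the difficult step would instead be proving that these $\ell$ derivations generate $D(\A(K_\ell))$ over the polynomial ring, where the degree count $\sum_{k} k = \binom{\ell}{2} = |\A(K_\ell)|$ serves as the natural bookkeeping device.
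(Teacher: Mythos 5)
Your proof is correct and follows the standard argument: membership of each $\theta_k$ in $D(\A(K_\ell))$ via the factorization $x_i^k - x_j^k = (x_i - x_j)\sum_{m=0}^{k-1} x_i^m x_j^{k-1-m}$, and then Saito's criterion (Theorem~\ref{thm:saito}) applied to the coefficient matrix, which is the Vandermonde matrix with determinant $\prod_{1\leq i<j\leq \ell}(x_j - x_i) = \pm Q(\A(K_\ell))$. The paper states this theorem without proof, citing Saito's original work, and the classical proof behind that citation is exactly the Vandermonde--Saito-criterion argument you give, so your approach coincides with the intended one.
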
 
	
	The structure of the generators of the antihole is closely related to the connectivity (see Definition \ref{def:con}) of the antihole graphs and their separator structure. A separator of a connected graph $G$ is a set of vertices $T$, such that $G\setminus T$ is disconnected (see Definition \ref{def:sep}). In this work, we always assume that the graph $G$ to be connected. 
	With a focus on this, we can generalise the concept generating systems for antihole graphs to all graphs and thus write down the first systematic method of finding a generating set.\par

	To this end, we introduce separator-based $\A$-derivations as the central objects of this work: 
	\begin{definition} \label{def:sep_based_der} 
		Let $G$ be a graph and $T$ a separator of $G$ and let $C$ be a connected component of $G\setminus T$. We define a derivation of $D(\A(G))$  by \[\theta_{C}^T = \sum_{i \in C} \prod_{t \in T} (x_i-x_t) \cdot  D_i, \]
		where $D_i$ is the partial derivative $\partial_{x_i}$.  
	\end{definition} 
	With this definition and the derivations $\theta_i$, we are able to state the main theorem: 
	
	\begin{theorem}\label{thm:main}  
		Let $G$ be a graph, then there exists a minimal generating set for $D(\A(G))$ which consists of $\theta_0,\dots, \theta_\kappa$ for $\kappa$ the connectivity of $G$ and a finite set of separator-based $\A$-derivations. 
	\end{theorem}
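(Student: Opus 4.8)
The plan rests on the standard description of the derivation module of a graphic arrangement. Writing $S=\mathbb{K}[x_1,\dots,x_\ell]$ for the coordinate ring and $\mathfrak{m}$ for its irrelevant ideal, a derivation $\theta=\sum_i f_i D_i$ lies in $D(\A(G))$ exactly when $(x_i-x_j)\mid (f_i-f_j)$ for every edge $\{i,j\}\in E$. First I would check that the proposed elements are derivations: for $\theta_k$ this is immediate from $(x_i-x_j)\mid(x_i^k-x_j^k)$, and for $\theta_C^T$ one verifies the divisibility edge by edge. The only interesting case is an edge from some $i\in C$ to a vertex $b\notin C$; since $G\setminus T$ has no edges between distinct components, such a $b$ must lie in $T$, so the factor $(x_i-x_b)$ already divides $f_i=\prod_{t\in T}(x_i-x_t)$ while $f_b=0$. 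This membership check is the routine part.

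The heart of the argument is generation, which I would split into a reduction and an induction. To peel off the contribution of the $\theta_k$ I would use the diagonal specialization $\phi\colon S\to\mathbb{K}[t]$, $x_i\mapsto t$. For any $\theta=\sum_i f_iD_i\in D(\A(G))$ the edge conditions give $\phi(f_i)=\phi(f_j)$ on each edge, and connectivity of $G$ propagates this to all pairs, so $\psi(\theta):=\phi(f_1)$ is a well-defined $S$-linear map $\psi\colon D(\A(G))\to\mathbb{K}[t]$, where $S$ acts on $\mathbb{K}[t]$ through $\phi$. One computes $\psi(\theta_k)=t^k$, whereas every separator-based derivation satisfies $\psi(\theta_C^T)=0$. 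Given homogeneous $\theta$ of degree $d$ with $\psi(\theta)=c\,t^d$, subtracting $c\,\theta_d$ when $d\le\kappa$, or $c\,x_1^{\,d-\kappa}\theta_\kappa$ when $d>\kappa$, produces an element of $\ker\psi$, and both subtracted terms lie in $S\theta_0+\dots+S\theta_\kappa$. It then remains to show that $\ker\psi$ is contained in the submodule generated by $\theta_0,\dots,\theta_\kappa$ together with the separator-based derivations, and for this I would induct on the number of vertices $\ell$, the complete graph being the base case handled by Saito's theorem. For non-complete $G$ I would fix a minimum separator $T$ with $|T|=\kappa$ and components $C_1,\dots,C_r$ of $G\setminus T$; an element of $\ker\psi$ has all coefficients vanishing on the diagonal, and splitting it along the $C_s$ and $T$ the divisibility conditions decouple across distinct components, interacting only through $T$. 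The precise shape $\prod_{t\in T}(x_i-x_t)$ of the separator-based derivations is tailored to absorb exactly this interaction through $T$, reducing each component piece to a derivation problem on the strictly smaller induced subgraph $G[C_s\cup T]$, to which the inductive hypothesis applies.

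For minimality and finiteness I would argue as follows. Finiteness is automatic, since $G$ has only finitely many separators and each has finitely many components. For minimality, note that $\kappa$-connectivity forbids separators of size below $\kappa$, so every separator-based derivation has degree at least $\kappa$ and, in particular, none occurs in degree strictly below $\kappa$; combined with a direct coefficient computation showing $\theta_k\notin\mathfrak{m}\,D(\A(G))$, equivalently $\theta_k\notin S_1\,D(\A(G))_{k-1}$, for $0\le k\le\kappa$, this forces $\theta_0,\dots,\theta_\kappa$ into any minimal generating set extracted from the spanning set above. Pruning the redundant separator-based derivations by graded Nakayama then yields a minimal generating set of the asserted form.

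The main obstacle is the inductive generation step for $\ker\psi$. A derivation of $G$ does not restrict naively to $G[C_s\cup T]$, because its coefficients involve all the variables at once, so the reduction must simultaneously clear the coupling through $T$ using separator-based derivations and keep the degrees under control so that the induction on $\ell$ terminates; making the component-wise splitting compatible with the $T$-interaction is where the real work lies. I also expect the non-redundancy of $\theta_\kappa$, in the presence of the degree-$\kappa$ separator-based derivations, to require a genuine dimension count rather than the specialization $\psi$ alone, since modulo $\mathfrak{m}\,D(\A(G))$ the map $\psi$ detects only $\theta_0$.
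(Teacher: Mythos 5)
Your setup (the edge-divisibility criterion, the membership check for $\theta_C^T$, and the diagonal specialization $\psi$ peeling off the $\theta_k$-part) is sound, but the proof has a genuine gap exactly where you yourself locate ``the real work'': the inductive generation step for $\ker\psi$ is not an argument, only a hope, and the specific route you propose runs into concrete obstructions. First, a derivation of $\A(G[C_s\cup T])$ does not extend by zero to $\A(G)$, and the fix of twisting by $\prod_{t\in T}(x_i-x_t)$ does not produce $S$-multiples of $\theta_C^T$: it produces the derivations $\theta_C^{T,p}=\sum_{k\in C}\prod_{t\in T}(x_k-x_t)\,p(x_k)\,D_k$, where the polynomial $p$ is evaluated coordinate-wise, so for $\vert C\vert\geq 2$ these form a genuinely infinite family not contained in $S\cdot\theta_C^T$. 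Hence your claim that ``finiteness is automatic'' is false as stated: reducing this infinite family to finitely many generators is the content of the paper's descending-chain lemma (Lemma \ref{polynomial_generation}), a real inductive argument that moreover forces \emph{non-minimal} separators into the generating set. Example \ref{ex:desc_chain} makes this unavoidable: for the $4$-vertex chordal graph there, $\theta_0$ together with the derivations of the unique minimal separator $\{4\}$ give only three generators, and the necessary fourth, $(x_2-x_4)(x_2-x_3)D_2$, is based on the non-minimal separator $\{3,4\}$ arising from a descending chain; your recursion, which fixes a single minimum separator per step, never explains how such elements appear. Second, your induction on $\ell$ is structurally shaky: a minimal separator of the induced subgraph $G[C_s\cup T]$ need not separate $G$ at all (two vertices of $T$ separated inside $G[C_s\cup T]$ may be reconnected through another component $C_{s'}$), so even granting the decoupling, the inductive output would not consist of separator-based derivations \emph{of $G$} unless you pass to the standard block construction with $T$ completed to a clique -- which you do not do, and which changes the arrangement.

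For comparison, the paper avoids your splitting problem entirely by inducting on the number of \emph{non-edges} rather than on vertices: starting from Saito's basis for $K_\ell$ and deleting one edge $\{v,w\}$ at a time, it telescopes $\theta(x_v-x_w)$ along all $v$--$w$ paths, applies a monomial transversal lemma (Lemma \ref{lem: mongen}, the same device used in Proposition \ref{connectedprop}) to write the obstruction as a combination of monomials meeting every path, observes that each such monomial supports a minimal $(v,w)$-separator, and subtracts the corresponding $\theta_{C_U}^{T_U,p_U}$ to land back in the smaller module (Proposition \ref{genprop}); finiteness then comes from Lemma \ref{polynomial_generation} and Corollary \ref{cor:thetai}. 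Your specialization $\psi$ is a clean observation and your minimality discussion (degree bound $\vert T\vert\geq\kappa$ plus graded Nakayama) is in the right spirit, but as submitted the proposal proves membership and a reduction, not the theorem.
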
 
	
	In a recent work, Burity and Toh\v{a}neanu analyse the minimal degree of a Jacobian relation 
	for a graphic arrangement $\A(G)$ 
	(\autocite{burity}, Appendix A). For this, they also use the (vertex)-connectivity of $G$ and its vertex-cut sets (see Definitions \ref{def:sep} and \ref{def:con}). A connection between these graph theoretical properties and the module of logarithmic derivations has thus already been established in the literature.
	
	This approach through graph theory offers new insights into classic questions surrounding minimal generating sets. For instance, Theorem \ref{thm:main} was used in a recent project on likelihood correspondence, where the defining polynomials of hyperplanes are interpreted as probability functions in a statistical model \cite{Kahle}. 
	
	Wakefield analysed the derivation degree sequence of a graphic arrangements and gives some bounds for the highest degree (\autocite{wakefield}, Corollary 4.6).  
	
	\begin{definition}\label{def:der_deg_seq} 
		Let $\mathcal{A}$ be an arrangement and $B$ be a minimal generating set of $D(\mathcal{A})$. Call the ordered sequence of polynomial degrees of all elements in $B$ the \emph{derivation degree sequence} of $\mathcal{A}$.  
	\end{definition}

	If $\mathcal{A}$ is a free arrangement, we call the derivation degree sequence the \emph{exponents} of $\mathcal{A}$. In the free case, there is a lot of research pertaining to interpreting the exponents in the different arrangement theoretical contexts. For instance, we can encode the intersection lattice of $\A$ in the so-called Poincaré-polynomial and Teraos's Factorization Theorem states that if $\A$ is free, this polynomials factors over the integers with roots exactly the exponents. In the non-free case not as much is known, yet the sequence might be used to understand the interplay of the combinatorics of $\A$ with its geometry. 
	
	With this new explicit construction, we are able to explain the findings of Wakefield in a broader framework and improve on the bounds he presents.
	
	\begin{proposition}
		Let $G$ be a graph, $\Theta_G$ a minimal generating set of $D(\A(G))$ and $d$ the maximal degree of the derivation degree sequence. Let further $\Delta(G)$ be the highest degree of all vertices in $G$, $c$ be the cardinality of the largest clique in $G$ and $t_{\max}$ the highest cardinality of a minimal separator of $G$. Then, \[\max\{c-1, t_{\max}\} \leq d \leq \Delta(G).\] 
	\end{proposition}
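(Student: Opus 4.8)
The plan is to compute the derivation degree sequence directly from the explicit minimal generating set furnished by Theorem \ref{thm:main}. That set consists of $\theta_0,\dots,\theta_\kappa$ together with finitely many separator-based derivations $\theta_C^T$. Reading off polynomial degrees, $\deg\theta_k=k$ and $\deg\theta_C^T=|T|$, since the coefficient of each $D_i$ in $\theta_C^T$ is the product $\prod_{t\in T}(x_i-x_t)$ of $|T|$ linear forms. Hence the maximal degree is $d=\max\{\kappa,\ \max_T|T|\}$, the second maximum running over the separators $T$ occurring in the generating set. Both inequalities then reduce to comparing the numbers $\kappa$ and $|T|$ with the invariants $\Delta(G)$, $c$ and $t_{\max}$.

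For the upper bound I would show that every generator degree is at most $\Delta(G)$. First, $\kappa\le\delta(G)\le\Delta(G)$ by the standard inequality between connectivity and minimum degree $\delta(G)$. Second, for each separator $T$ appearing in the generating set I would bound $|T|\le\Delta(G)$. The cleanest route is to locate each needed separator inside the neighbourhood of a single vertex: if $C$ is the relevant component of $G\setminus T$ and $v\in C$ is adjacent to all of $T$, then $T\subseteq N(v)$ and $|T|\le\deg(v)\le\Delta(G)$. The remaining task is to show that separators not of this form contribute only redundant derivations, so that a minimal generating set retains only those of degree at most $\Delta(G)$. I expect this reduction, namely controlling precisely which separator-based derivations survive in a minimal system, to be the main obstacle, and the place where the improvement over Wakefield's bound is actually earned.

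For the lower bound I would treat the two terms separately. The inequality $d\ge t_{\max}$ is immediate once Theorem \ref{thm:main} is in force: a minimal separator $T$ of maximal cardinality $t_{\max}$ contributes a derivation $\theta_C^T$ of degree $t_{\max}$ to the minimal generating set, so this degree occurs in the sequence. For $d\ge c-1$ I would localise. Let $U$ be a clique of size $c$ and let $X=\bigcap_{i,j\in U}\ker(x_i-x_j)$ be the flat on which all clique coordinates agree; the localization $\A(G)_X$ consists exactly of the hyperplanes $\ker(x_i-x_j)$ with $i,j\in U$, i.e.\ it is the braid arrangement $\A(K_c)$ up to a trivial factor in the remaining coordinates. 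By Saito's theorem \cite{saito} this localization is free with top exponent $c-1$, so its derivation module requires a generator of degree $c-1$. Since localization cannot raise the degrees needed to generate and the top degree $c-1$ of the braid arrangement cannot be produced from derivations of smaller degree, $D(\A(G))$ must already contain a minimal generator of degree at least $c-1$. Combining the two estimates yields $d\ge\max\{c-1,t_{\max}\}$.

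The delicate points are thus twofold. On the upper side, one must pin down the exact family of separators entering a minimal generating set and verify that none exceeds $\Delta(G)$ in cardinality; on the lower side, one must make the localization argument for $c-1$ precise, in particular checking that the degree-$(c-1)$ generator of the clique's braid arrangement is genuinely reflected in the grading of $D(\A(G))$ rather than absorbed by the lower-degree derivations $\theta_0,\dots,\theta_\kappa$.
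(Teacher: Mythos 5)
Your plan of ``reading the degrees off Theorem~\ref{thm:main}'' leaves unproven exactly the steps that constitute the paper's actual proof, and in two places this is a genuine gap rather than a routine verification. First, $d \geq t_{\max}$ is \emph{not} immediate from Theorem~\ref{thm:main}: that theorem only says \emph{some} finite set of separator-based derivations, together with $\theta_0,\dots,\theta_\kappa$, generates $D(\A(G))$; a priori the derivations $\theta_C^T$ attached to a minimal separator of maximal cardinality could be redundant, e.g.\ expressible through derivations attached to smaller separators in the manner of Lemma~\ref{lem:poset_gen}, and then no generator of degree $t_{\max}$ would survive minimization. The paper closes precisely this hole with Lemma~\ref{lem:non_min_gen} (derivations of a minimal separator are never generated from derivations of \emph{other} minimal separators) together with the counting in Proposition~\ref{prop:subsequ}, which shows each $T \in \mathscr{S}_m$ forces $\vert\mathscr{C}(T)\vert - 1$ generators of degree $m$ into every minimal generating set (the complement trick of Example~\ref{lem:gen_compl} removes at most one component per separator). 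Second, your upper bound stops at ``show that separators not contained in some $N(v)$ contribute only redundant derivations'' --- you correctly identify this as the main obstacle, but do not resolve it. Note moreover that a minimal generating set also contains the chain derivations $\theta_{C_i}^{T_i}$ of Definition~\ref{separatorchain}, whose sets $T_i = \bigcup_{j\leq i} N(v_j)\setminus C_i$ are generally \emph{not} minimal separators of $G$, so the bound must be argued along descending chains, which is what the paper's proof does by bounding $\vert T\vert$ via the degrees of separated vertices for adjacent elements of the poset.

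Your localization argument for $d \geq c-1$ is a genuinely different route from the paper's (which walks a descending chain through the component containing the clique $\mathcal{C}$, observes that at the step where the chain first absorbs a clique vertex the set $T_{i+1}$ must contain the other $c-1$ clique vertices, and then checks non-redundancy), but as written it fails at the pivotal step. Localizing at the prime $p_X$ of the flat $X$ inverts every homogeneous polynomial not vanishing on $X$ --- including polynomials of positive degree --- so the grading, and with it the well-definedness of the derivation degree sequence, is destroyed. The identification $D(\A(G))_{p_X} = D(\A(G)_X)_{p_X}$ only yields generation of the \emph{localized} module, in which the degree-$(c-1)$ basis element of the clique's braid arrangement can perfectly well be a combination of images of lower-degree generators with unit coefficients of positive degree; ``localization cannot raise the degrees needed to generate'' is exactly the assertion requiring proof, and in the ungraded localized setting it is false as a general principle. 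To salvage this route you would need a graded mechanism (e.g.\ an argument on the degree-$(c-2)$ graded piece of $D(\A(G))$ restricted to the clique coordinates), or else fall back on the paper's combinatorial chain argument.
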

	The lower bound on the maximum degree in \cite{wakefield} is less or equal to $t_{\max}$. 
	
	\medskip
	Some further applications involve more specific classes of freeness and whether they have a graphic classification. For instance, it is an open question if MAT-free arrangements are always flag-accurate, a notion introduced recently by Mücksch and Röhrle \cite{mücksch}. This will be explored in future work. 
	
	This paper is organized as follows: In Section \ref{graph_theory_preliminaries} we recall the basic concepts of graph and separator theory and in Section \ref{hyperplane_arrangements_preliminaries} we recall definitions and tools in the study of freeness of hyperplane arrangements. 
	The role that connectivity of the underlying graph plays in the structure of $D(\A(G))$ in terms of similarity to the module of the braid arrangement is explored in Section \ref{construction}. Furthermore, the construction of a generating set for an arbitrary graph $G$, using derivations of the form of Definition~\ref{def:sep_based_der} is described and Theorem \ref{thm:main} is proved. We apply this construction to study the derivation degree sequence of the module of logarithmic derivations and present our findings related to the highest degree (lower and upper bounds) and subsequences in Section \ref{minimality}. 
	
	\subsection*{Acknowledgments}
	Parts of this manuscript were written during a research stay with Takuro Abe at Rikkyo University. The author would like to thank him for fruitful discussions as well as the Bielefeld Graduate School of Theoretical Sciences whose Mobility Grant supported the research stay. The author would like to thank Max Wakefield for his helpful remarks and interesting questions about the derivation degree sequence and Lukas Kühne for his supervision and guidance throughout the research process. 
	
	The author is funded by the Deutsche Forschungsgemeinschaft (DFG, German Research Foundation) –
	Project-ID 491392403 – TRR 358.

	\section{Preliminaries} 
	\subsection{Graph Theory}\label{graph_theory_preliminaries} 
	In this section, we define properties of interest to us while studying graphic arrangements.  We mostly base this section on \cite{Diestel}. 
	We only consider finite, simple graphs:

	\begin{definition}
		A simple graph $G$ on a finite set $V$ is a tuple $(V,E)$, such that $E \subseteq \binom{V}{2}$. $E$ is the set of edges connecting the vertices in $V$. 
	\end{definition}
	For a graph $G$ we write the set of vertices and edges of $G$ as $V(G), E(G)$ respectively and write $uv$ for the undirected edge $\{u,v\}$. We are interested in special types of graphs:
	\begin{definition} 
		\begin{enumerate} 
			\item A path is a non-empty graph $P = (V,E)$ of the form $V = \{x_0,\dots,x_k\}$, $E = \{x_0x_1, x_1x_2,\dots,x_{k-1}x_k\}$, where  $x_i$ are all distinct. 
			\item Given sets $A,B \subseteq V(G)$, we call $P = x_0\dots x_k$ an $A-B$ path if $V(P)\cap A = \{x_0\}$ and $V(P)\cap B = \{x_k\}$. 
			\item If $P = x_0 \dots x_{k-1} = (V,E)$ is a path, and $k \geq 3$, then the graph $C = (V, E\cup \{x_{k-1}x_0\})$ is called a ($k$-)cycle.
		\end{enumerate} 
	\end{definition} 
	
	Denote by $N(v) = \{w\in G~\vert~w~\text{is adjacent to}~v \}$ the \emph{neighbourhood} of a vertex $v$ and by $\deg(v) := \vert N(v)\vert$ its \emph{degree}. For a graph $G$ denote by $\delta(G)$ the minimal and by $\Delta(G)$ the maximal degree of the vertices of $G$. 
	
	For sets $A,B$ in $V(G)$ we say that $T\subseteq V(G)$ separates $A$ and $B$ if every $A-B$-path in $G$ contains a vertex in $T$. $T$ separates two vertices $a,b$ if it separates the sets $\{a\}, \{b\}$, but $a,b \notin T$ and we say that $T$ separates $G$ if it separates some two vertices $a,b \in V(G)$. In this case, $T$ is called an \emph{(a,b)-separator} or \emph{separator}.

	\begin{definition} \label{def:sep} 
		A set $T\subseteq V(G)$ is called a minimal $(a,b)$-separator if it is an $(a,b)$-separator and no proper subset of $T$ separates $a$ and $b$.
		
		$T$ is called a minimal separator of $G$ if it is a minimal $(a,b)$-separator for some vertices $a,b$. 
	\end{definition} 
	Denote the set of all minimal separators of a graph $G$ by $\mathscr{S}(G)$. 
	For a separator $T$ of a graph $G$, write the set of connected components of $G\setminus T$ as $\mathscr{C}_G(T)$ or just $\mathscr{C}(T)$. 
	
	Denote by $\mathscr{S}_m = \{T \in \mathscr{S}(G), \vert T \vert = m\}$. 
	Define $t_{\max}$, respectively $t_{\min}$ as  maximum, respectively minimum $m$, such that   $\mathscr{S}_m$ is non-empty. 
	
	\begin{remark} Note that the minimality definition does not state anything about whether a given $(a,b)$-separator has the minimal cardinality necessary to separate $a$ and $b$. 
	\end{remark} 
	\begin{figure}[H]
		\includegraphics[width=0.3\textwidth]{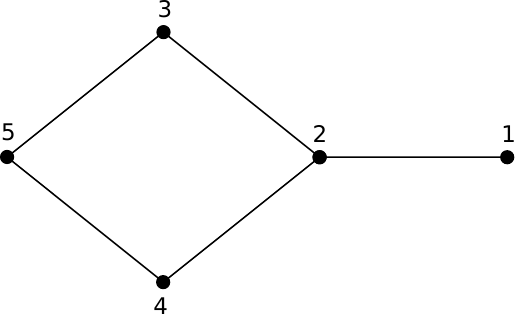} 
		\caption{Example graph with minimal separators of different sizes.} 
		\label{fig:ex:separators}
	\end{figure}
	In the example graph of Figure \ref{fig:ex:separators}, the vertex sets $\{3,4\}$ and $\{2\}$ are both minimal $(1,5)$-separators. The vertex set $\{2,5\}$ is a $(1,4)$-separator, but not minimal, since $\{2\}$ is also separating $1$ and $4$. Nevertheless, $\{2,5\}$ is a minimal $(3,4)$-separator. 
	
	\medskip
	For a vertex set $X$, denote by $G\setminus X$ the graph obtained by removing $X$ from $V(G)$ and removing all edges containing vertices in $X$. 
	Closely connected to the notion of separators is the connectivity of a graph: 
	
	\begin{definition} \label{def:con} 
		A graph $G$ is called $k$-connected, $k\in \mathbb{N}$ if $\vert V(G) \vert > k$ and $G\backslash X$ is connected for every $X \subseteq V(G)$ with $\vert X\vert < k$. 
	\end{definition}
	Stated differently, no two vertices in a $k$-connected graph are separated by fewer than $k$ other vertices. A $1$-connected graph is usually just referred to as $\emph{connected}$. The greatest integer, such that $G$ is $k$-connected is called the \emph{connectivity} of $G$, denoted $\kappa(G)$. For instance, the connectivity of the complete graph $K_\ell$ is $\ell-1$. 
	
	\subsection{Hyperplane Arrangements}\label{hyperplane_arrangements_preliminaries}  
	This section gives an overview over the essential objects and tools necessary the construction of the generator system described above. It closely follows \cite{OrlikTerao}.
	\begin{definition} 
		Let $\mathbb{K}$ be a field and let $W$  be a vector space of dimension $\ell$. A hyperplane $H$ in $W$ is an subspace of dimension $\ell-1$. A hyperplane arrangement $\mathcal{A}$  is a finite set of hyperplanes in $W$. 
	\end{definition} 
	
	Let $W^*$ be the dual space of $W$ and $S = S(W^*)$ be the symmetric algebra of $W^*$. Identify $S$ with the polynomial algebra $S = \mathbb{K}[x_1,\dots,x_\ell]$. 
	\begin{definition} 
		Let $\mathcal{A}$ be a hyperplane arrangement. Each hyperplane $H \in \mathcal{A}$ is the kernel of a polynomial $\alpha_H$ of degree 1 defined up to a constant factor. The product \[Q(\mathcal{A}) = \prod_{H \in \mathcal{A}} \alpha_H\] is called a defining polynomial of $\mathcal{A}$. 
	\end{definition} 
	Since $\alpha_H$ is a linear form for all $H$, $Q(\mathcal{A})$ is a homogeneous polynomial. 
	
	\begin{example} \label{ex:braid} 
		The braid arrangement $\A_{\ell-1}$ is the hyperplane arrangement with defining polynomial \[Q(\A_{\ell-1}) = \prod_{1\leq i < j \leq \ell} (x_i-x_j).\]
		In other words, it is the graphic arrangement associated to the complete graph $K_\ell$ (see Definition~\ref{def:graph_arr}). 
		
	\end{example} 
	
		We can associate a module to the hyperplane arrangement $\mathcal{A}$: 
		\begin{definition} 
			A $\mathbb{K}$-linear map $\theta: S \rightarrow S$ is a derivation if for $f,g \in S$ it holds that: \[\theta(f\cdot g) = f\cdot \theta(g)+g\cdot \theta(f). \] Let $\emph{Der}_{\mathbb{K}}(S)$ be the $S$-module of derivations of $S$. \\ Define an $S$-submodule of $\emph{Der}_{\mathbb{K}}(S)$, called the module of $\mathcal{A}$-derivations, by \[D(\mathcal{A}) = \{\theta \in \emph{Der}_{\mathbb{K}}(S)~\vert ~ \theta(\alpha_H)\in \alpha_H\cdot S~\text{for all}~H\in \mathcal{A}\}.\]
			The arrangement $\mathcal{A}$ is called free if $D(\mathcal{A})$ is a free $S$-module. 
		\end{definition} 
		Let $D_i$ denote the usual partial derivation $\partial_{x_i}$. One can show  that $\{D_i\}_{0\leq i \leq \ell}$ is a basis of $\text{Der}_{\mathbb{K}}(S)$ over $S$, i.e. if $\theta\in \text{Der}_{\mathbb{K}}(S)$, then $\theta = \sum \theta(x_i)\cdot D_i$ . If $D(\mathcal{A})$ is a free module, there is a criterion to determine whether a set of derivations is a basis for that module: 
		\begin{definition} \label{def:matcoef} 
			Given derivations $\nu_1,\dots,\nu_\ell\in D(\mathcal{A})$, define the coefficient matrix $M(\nu_1,\dots,\nu_\ell)$ as $M_{i,j} = \nu_j(x_i)$.
		\end{definition} 
		\begin{theorem}(Saito's criterion) \label{thm:saito}

			Given $\nu_1,\dots,\nu_\ell\in D(\mathcal{A})$, the following two conditions are equivalent: 
			\begin{enumerate} 
				\item $\det M(\nu_1,\dots,\nu_\ell) = c\cdot Q(\mathcal{A})$ for some $c\in \mathbb{K}^*$. 
				\item $\nu_1,\dots,\nu_\ell$ form a basis for $D(\mathcal{A})$ over  $S$. 
			\end{enumerate} 
		\end{theorem}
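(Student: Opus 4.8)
The plan is to work inside the free $S$-module $F = \mathrm{Der}_{\mathbb{K}}(S) \cong S^\ell$ with basis $D_1,\dots,D_\ell$, in which $D(\A)$ sits as the finite intersection $D(\A) = \bigcap_{H \in \A} D(\{H\})$ of the submodules $D(\{H\}) = \{\theta \mid \theta(\alpha_H) \in \alpha_H S\}$. With the conventions of Definition~\ref{def:matcoef}, the columns of $M(\nu_1,\dots,\nu_\ell)$ are exactly the coordinate vectors of the $\nu_j$ in this basis, so $\det M$ is the natural coefficient determinant and is multiplied only by a nonzero scalar under a linear change of the $x_i$. Two preliminary facts drive everything. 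First, a divisibility lemma: for any $\nu_1,\dots,\nu_\ell \in D(\A)$ one has $Q(\A) \mid \det M$. To see $\alpha_H \mid \det M$, I would write $\alpha_H = \sum_i a_i x_i$ (say $a_1 \neq 0$) and replace the first row of $M$ by $\sum_i a_i\cdot(\text{row } i)$; this elementary operation scales $\det M$ by $a_1$ and yields a matrix whose first row is $(\nu_j(\alpha_H))_j$, every entry of which lies in $\alpha_H S$ by definition of $D(\A)$, so $\alpha_H \mid \det M$. As the $\alpha_H$ are pairwise non-associate and $S$ is a UFD, their product $Q(\A)$ divides $\det M$. Second, for a single hyperplane, choosing coordinates with $\alpha_H = x_1$ gives $D(\{H\}) = x_1 S\, D_1 \oplus \bigoplus_{i\geq 2} S\, D_i$, which is free with coefficient determinant $\alpha_H$.

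For the implication $(1) \Rightarrow (2)$ I would argue by Cramer's rule. Assuming $\det M = c\,Q(\A)$ with $c \in \mathbb{K}^*$, the $\nu_j$ are linearly independent over $\mathrm{Frac}(S)$, so any $\theta \in D(\A)$ can be written $\theta = \sum_j f_j \nu_j$ with $f_j = \det M_j / \det M \in \mathrm{Frac}(S)$, where $M_j$ is $M$ with its $j$-th column replaced by the coordinate vector of $\theta$. But $M_j$ is itself the coefficient matrix of the $\ell$ derivations obtained from $\nu_1,\dots,\nu_\ell$ by substituting $\theta$ into slot $j$, all of which lie in $D(\A)$; hence the divisibility lemma gives $Q(\A) \mid \det M_j$, and therefore $f_j = \det M_j /(c\,Q(\A)) \in S$. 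Thus the $\nu_j$ generate $D(\A)$, while $\det M \neq 0$ gives their $S$-linear independence, so they form a basis.

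The implication $(2) \Rightarrow (1)$ is where the real work lies, and I would handle it by localizing at height-one primes. Since localization is exact and commutes with the finite intersection above, a basis $\{\nu_j\}$ of $D(\A)$ remains an $S_{\mathfrak p}$-basis of $D(\A)_{\mathfrak p}$ for every prime $\mathfrak p$. At a height-one prime $\mathfrak p = (p)$ I would compute $D(\A)_{\mathfrak p}$ explicitly: if $p$ is not associate to any $\alpha_H$, then every $\alpha_H$ is a unit in $S_{\mathfrak p}$, each $D(\{H\})_{\mathfrak p} = F_{\mathfrak p}$, and $D(\A)_{\mathfrak p} = F_{\mathfrak p}$ has coefficient determinant a unit; if instead $p$ is associate to some $\alpha_H$, then by height and irreducibility $\mathfrak p = (\alpha_H)$ is associate to exactly this one form, so $D(\A)_{\mathfrak p} = D(\{H\})_{\mathfrak p}$, which by the single-hyperplane computation is free with coefficient determinant $\alpha_H$. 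Comparing the basis $\{\nu_j\}$ with these explicit local bases (two bases of a free module over the local ring $S_{\mathfrak p}$ differ by a matrix in $GL_\ell(S_{\mathfrak p})$, whose determinant is a unit) yields $\mathrm{ord}_{\mathfrak p}(\det M) = 1$ when $\mathfrak p = (\alpha_H)$ and $\mathrm{ord}_{\mathfrak p}(\det M) = 0$ at all other height-one primes. Since $\det M \neq 0$ and $S$ is a UFD, a nonzero element is recovered up to a scalar from its orders at all height-one primes, giving $\det M = c\,Q(\A)$ for some $c \in \mathbb{K}^*$.

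The main obstacle is this last step: verifying that $D(\A)$ localizes correctly, i.e.\ that $D(\A)_{\mathfrak p} = D(\{H\})_{\mathfrak p}$ at $\mathfrak p = (\alpha_H)$, so that only the hyperplane through the generic point of $\{\alpha_H = 0\}$ is seen. This rests on applying exactness of localization to the defining sequence $0 \to D(\{H\}) \to F \to S/\alpha_H S$ (which forces $D(\{H\})_{\mathfrak p} = F_{\mathfrak p}$ as soon as $\alpha_H \notin \mathfrak p$, since then $(S/\alpha_H S)_{\mathfrak p}=0$) together with the commutation of localization with the finite intersection $\bigcap_H D(\{H\})$. The one bookkeeping point to keep clean is that changing coordinates to arrange $\alpha_H = x_1$ merely rescales $\det M$ by a nonzero constant and hence leaves every $\mathrm{ord}_{\mathfrak p}$ unchanged.
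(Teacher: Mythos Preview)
The paper does not prove Saito's criterion: Theorem~\ref{thm:saito} is stated in the preliminaries as a classical result taken from \cite{OrlikTerao}, with no argument given. So there is no ``paper's own proof'' to compare against; your proposal stands or falls on its own merits.

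Your argument is correct. The divisibility lemma and the Cramer's-rule proof of $(1)\Rightarrow(2)$ are exactly the standard treatment. Your route for $(2)\Rightarrow(1)$ via localization at height-one primes is a valid and clean alternative to the more common textbook argument (which, in the homogeneous setting, typically combines the divisibility lemma $Q(\A)\mid\det M$ with a degree count, using that any two homogeneous bases of a graded free module have the same multiset of degrees, so $\deg\det M=\sum\mathrm{pdeg}\,\nu_j=\deg Q(\A)$). Your approach has the advantage of working verbatim without any homogeneity hypothesis on the $\nu_j$, matching the statement as given, whereas the degree-count argument needs one to first reduce to a homogeneous basis. The point you flag as the ``main obstacle'' is fine: the exact sequence $0\to D(\{H\})\to F\to S/\alpha_H S$ identifies $D(\{H\})$ as a kernel, localization is exact, and a finite intersection of submodules of $F$ is a finite limit, hence also commutes with localization; together these give $D(\A)_{\mathfrak p}=\bigcap_H D(\{H\})_{\mathfrak p}$ and the case split you describe. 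The conclusion via valuations in the UFD $S$ is then immediate.
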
 
		We will usually write homogeneous derivations in the basis $\{D_i\}_{0\leq i\leq \ell}$ or as the matrix $M(\nu_1,\dots, \nu_\ell)^t$. 
		
		Denote by $S_p$ the $\mathbb{K}$-vector space of $S$ consisting of 0 and the homogeneous polynomials of degree $p$ for $p \geq 0$. For $p < 0$, set $S_p = 0$. Then, \[S = \bigoplus_{p \in \mathbb{Z}} S_p\] is a graded $\mathbb{K}-algebra$. 
		\begin{definition} 
			A non-zero element $\theta\in \emph{Der}_{\mathbb{K}}(S)$ is homogeneous of polynomial degree p if $\theta = \sum_{k=1}^\ell f_k\cdot D_k$ and $f_k\in S_p$ for $1\leq k\leq \ell$. Write $\emph{\text{pdeg}}(\theta) = p$.
		\end{definition} 
		Let $\text{Der}_\mathbb{K}(S)_p$ be the vector space consisting of all homogeneous derivations of polynomial degree $p$ for $p\geq 0$ with $\text{Der}_\mathbb{K}(S)_p = 0$ if $p <0$. Thus, $\text{Der}_\mathbb{K}(S)$ is a graded $S$-module \[\text{Der}_\mathbb{K}(S) = \bigoplus_{p \in \mathbb{Z}} \text{Der}_\mathbb{K}(S)_p. \]
		
		\section{Constructing a Generating Set} \label{construction} 
		
		In order to construct a generating set, we need the following lemma about ideals generated by monomials. 
		
		Let $\mathcal{Z}$ be a finite set of variables and $Z_1,\dots, Z_n \subseteq \mathcal{Z}$. Define $I_j := \langle Z_j \rangle$ to be the ideal generated by the variables $Z_i$ in the polynomial ring $\mathbb{K}[\mathcal{Z}]$.  Define the set of monomials \[Z_{\text{trans}} = \left\{\prod_{z \in Y} z ~\vert ~ Y \cap Z_i \not= \emptyset ~\text{for all}~1\leq i \leq n \right\}.\]
		
		To prove the following Lemma, we use a similar inductive argument as in the proof of Theorem 2.2 of \autocite{SahaSenguptaTripathi}.
		\begin{lemma} \label{lem: mongen} 
			$Z_{\emph{trans}}$ generates the ideal $I = \bigcap_{i = 1}^n I_i$. 
		\end{lemma}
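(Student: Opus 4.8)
The plan is to reduce the statement to a purely combinatorial condition on monomials, exploiting that every ideal in sight is a monomial ideal. Write $\mathrm{supp}(m)\subseteq\mathcal{Z}$ for the set of variables dividing a monomial $m$. The two standard facts I would invoke about monomial ideals are: first, that an intersection of monomial ideals is again a monomial ideal, so in particular $I=\bigcap_{i=1}^n I_i$ is generated by the monomials it contains, and to verify $I\subseteq\langle Z_{\mathrm{trans}}\rangle$ it suffices to test membership on monomials; second, that a monomial $m$ lies in $I_i=\langle Z_i\rangle$ if and only if some variable of $Z_i$ divides $m$, i.e. $\mathrm{supp}(m)\cap Z_i\neq\emptyset$.

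Combining these yields the membership criterion at the heart of the argument: a monomial $m$ lies in $I$ if and only if $\mathrm{supp}(m)$ meets every $Z_i$, that is, precisely when $\mathrm{supp}(m)$ is a transversal of the family $Z_1,\dots,Z_n$. With this in hand both inclusions are short. For $\langle Z_{\mathrm{trans}}\rangle\subseteq I$, each generator $\prod_{z\in Y}z$ has support exactly $Y$, and $Y$ meets every $Z_i$ by definition of $Z_{\mathrm{trans}}$, so the generator lies in $I$. For $I\subseteq\langle Z_{\mathrm{trans}}\rangle$, I would take any monomial $m\in I$; its support $Y:=\mathrm{supp}(m)$ is a transversal, hence $\prod_{z\in Y}z\in Z_{\mathrm{trans}}$, and this squarefree monomial divides $m$, giving $m\in\langle Z_{\mathrm{trans}}\rangle$.

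Alternatively, following the inductive scheme indicated by the reference, one inducts on $n$ after writing $I=\bigl(\bigcap_{i=1}^{n-1}\langle Z_i\rangle\bigr)\cap\langle Z_n\rangle$. The base case $n=1$ is immediate, since the singleton transversals recover the generators of $\langle Z_1\rangle$. For the inductive step one applies the formula that an intersection $\langle A\rangle\cap\langle B\rangle$ of two monomial ideals is generated by the least common multiples $\{\mathrm{lcm}(a,b):a\in A,\ b\in B\}$; here $\mathrm{lcm}\bigl(\prod_{z\in Y}z,\,w\bigr)=\prod_{z\in Y\cup\{w\}}z$ for $w\in Z_n$, and $Y\cup\{w\}$ is exactly a transversal of all $n$ sets, so the generators produced match $Z_{\mathrm{trans}}$.

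The main obstacle is bookkeeping rather than depth: everything rests on the monomial membership criterion, so the one point needing genuine care is the reduction justifying that it suffices to test $I\subseteq\langle Z_{\mathrm{trans}}\rangle$ on monomials alone, i.e. that $I$, as an intersection of monomial ideals, is itself monomial. Once that reduction is granted, the dictionary between transversals and supports makes both directions routine, and notably no minimality of the transversals $Y$ is required: redundant (non-minimal) transversals merely contribute non-minimal generators and do not enlarge the ideal.
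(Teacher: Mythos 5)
Your main argument is correct and is essentially the paper's own proof: the paper likewise reduces to a single monomial $\mathbf{z}_j$ of $f\in I$, picks for each $i$ a variable $z_i\in Z_i$ dividing it, and takes the reduced (squarefree) product $p\mid\mathbf{z}_j$ with $p\in Z_{\mathrm{trans}}$ --- your choice of the full support $\mathrm{supp}(m)$ as the transversal is the same idea, and you in fact make explicit the monomial-ideal reduction that the paper glosses over. Your alternative induction on $n$ via the $\mathrm{lcm}$ formula is also sound and closer in spirit to the inductive argument of Saha--Sengupta--Tripathi that the paper cites, but it is not needed given the direct proof.
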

		\begin{proof} 
			Let $f = \sum_{j \in J} \lambda_j \mathbf{z}_j \in I$. Each $\mathbf{z_j}$ is a monomial and by definition  in every ideal. Thus, for each $1\leq i \leq n$, there exists $z_i \in Z_i$ with $z_i ~\vert~ \mathbf{z_j}$. The $z_i$ need not be distinct, so consider the reduced product of distinct variables $p~\vert~\prod_{i = 1}^n z_i$. It holds that $p~\vert~\mathbf{z_j}$ and $p \in Z_{\text{trans}}$ which finishes the proof. 
		\end{proof} 
		In the following, we label the vertex of a graph $G$ corresponding to the variable $x_i$ as $i$. 
		\begin{proposition} \label{connectedprop} 
			
			Let $G$ be a $k$-connected graph on $\ell$ vertices, then 
			\[\bigoplus_{j < k} D(\A(G))_j = \bigoplus_{j < k} D(\A(K_\ell))_j. \]
			That is, the derivations of polynomial degree smaller than $k$ of $D(\A(G))$ coincide with the ones from the braid arrangement. 
		\end{proposition}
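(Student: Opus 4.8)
The plan is to prove the two inclusions separately. The inclusion $\bigoplus_{j<k} D(\A(K_\ell))_j \subseteq \bigoplus_{j<k} D(\A(G))_j$ is immediate: since $\A(G)\subseteq\A(K_\ell)$ as sets of hyperplanes, a derivation tangent to every $\ker(x_a-x_b)$ is in particular tangent to those coming from edges of $G$. So the whole content lies in the reverse inclusion, and it suffices to fix a degree $p<k$ and show that every homogeneous $\theta=\sum_j f_j D_j\in D(\A(G))_p$ (so $f_j\in S_p$) already lies in $D(\A(K_\ell))_p$. Unwinding the definition of the derivation module, $\theta\in D(\A(G))$ is equivalent to $(x_u-x_v)\mid(f_u-f_v)$ for every edge $uv\in E(G)$, whereas $\theta\in D(\A(K_\ell))$ is the same condition for \emph{every} pair of vertices. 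Thus I only need to promote edge-divisibility to all-pairs-divisibility, and the bound $p<k$ is the tool for this.

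First I would fix a non-adjacent pair $a,b$ (adjacent pairs are covered by hypothesis) and invoke Menger's theorem: as $G$ is $k$-connected with $k\geq p+1$, there are at least $p+1$ internally vertex-disjoint $a$--$b$ paths $P_0,\dots,P_p$. Telescoping the edge-divisibility along the (say $r_t$) edges of $P_t$ shows $f_a-f_b\in\mathfrak p_t:=\langle x_u-x_v : uv\in E(P_t)\rangle$, the ideal generated by the consecutive edge-forms of the path. Hence $f_a-f_b\in\bigcap_{t=0}^{p}\mathfrak p_t$, and the goal reduces to the purely algebraic claim that the degree-$p$ part of this intersection is contained in $(x_a-x_b)$.

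To control the intersection I would pass to linear coordinates adapted to the paths. Writing $w=x_a-x_b$ and $y^t_1,\dots,y^t_{r_t}$ for the edge-forms of $P_t$, the relation $w=\sum_i y^t_i$ holds along each path, so dropping the last edge-form of every path leaves a family $\{w\}\cup\{y^t_i : i<r_t\}$ of linearly independent forms (a short rank count / spanning-forest argument), which I extend to a coordinate system on $S$. In these coordinates each $\mathfrak p_t$ becomes the monomial ideal $\langle Z_t\rangle$ with $Z_t=\{w\}\cup\{y^t_i : i<r_t\}$, and crucially $Z_t\cap Z_{t'}=\{w\}$ because the paths are internally disjoint. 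Now Lemma~\ref{lem: mongen} describes $\bigcap_t\mathfrak p_t$ through its transversals: a transversal either contains $w$, in which case minimality forces it to equal $\{w\}$, or it avoids $w$ and must then pick one distinct variable from each of the $p+1$ sets $Z_t$, producing a monomial of degree at least $p+1$.

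Consequently $\bigcap_t\mathfrak p_t=(x_a-x_b)+J$, where $J$ is generated in degrees $\geq p+1$, so its degree-$p$ part is contained in $(x_a-x_b)$; this gives $(x_a-x_b)\mid(f_a-f_b)$ and finishes the argument. The main obstacle is the third step, where the numerical hypothesis $p<k$ has to be converted into a statement about the intersection of the path-ideals. The decisive point is recognising that, after the change of coordinates, the internally-disjoint paths turn the $\mathfrak p_t$ into monomial ideals sharing only the single variable $x_a-x_b$, so that Lemma~\ref{lem: mongen} forces every competing generator to have degree exceeding $p$ and thus to be invisible in degree $p$.
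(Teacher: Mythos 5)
Your proof is correct and takes essentially the same route as the paper: Menger's theorem supplies the internally disjoint $a$--$b$ paths, telescoping the edge conditions places $f_a-f_b$ in each path ideal, and Lemma~\ref{lem: mongen} combined with a degree count in linear coordinates adapted to the paths forces divisibility by $x_a-x_b$. The only cosmetic difference is that the paper first passes to the quotient $S/(x_v-x_w)$, which kills the shared form and makes the variable sets disjoint so that the intersection becomes a product of ideals, whereas you keep $w=x_a-x_b$ as a common variable of all the sets $Z_t$ and conclude directly from the transversal generators (those containing $w$ lie in $(w)$, the rest have degree at least $p+1$) together with homogeneity.
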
 
		\begin{proof} 
			
			The inclusion \[\bigoplus_{j <k} D(\A(G))_j \supseteq \bigoplus_{j <k} D(\A(K_\ell))_j\] is clear. It remains to show that for $\theta \in \bigoplus_{j < k} D(\A(G))_j$ and two vertices $v,w$, such that $\{v,w\} \notin E(G)$ it holds that $\theta(x_v-x_w) \in (x_v-x_w)\cdot S$. 
			So let $\theta \in \text{Der}_\mathbb{K}(S)_i$ for $0 \leq i < k$ and $\{v,w\} \notin E(G)$.

			\medskip 
			
			Since $G$ is $k$-connected, there exist at least $k$ disjoint paths in $G$ without common vertices \[P_1 = (v_1^1\dots v_{n_1}^1),\dots,P_{k}= (v_1^{k} \dots v_{n_{k}}^{k})\] between $v$ and $w$. 
			
			For each path $(v_1^s\dots v_{n_s}^s)$ we can write \begin{align*} 
				g := \theta(x_v-x_w) &= \theta(x_v) -\theta(x_w) \\ 
				&= \theta(x_v) - \theta(x_{v_1^s}) + \theta(x_{v_1^s}) - \dots -\theta(x_{v_{n_s}^s}) + \theta(x_{v_{n_s}^s}) - \theta(x_w).
			\end{align*}  
			Since $\theta\in D(\A)$, the defining condition can be applied to the edges in the path.
			
			We can thus find polynomials in $S$, such that for a suitable choice of polynomials $p_{u,t}, q_{u,t} \in \mathbb{K}[x_1,\dots, x_\ell]$, we can express $g$ as
			
			\begin{equation} \label{equ:g_ideals} 
				\begin{split} 		
					g &= (x_v-x_{v_1^s})\cdot p_{v, v_1^s} +(x_{v_1^s}-x_{v_2^s})\cdot p_{v_1^s, v_2^s} + \dots + (x_{v_{n_s}^s}-x_w)\cdot p_{v_{n_s}^s, w} \\ &= (x_v-x_{v_1^s})\cdot p_{v, v_1^s} +(x_{v_1^s}-x_v)\cdot p_{v_1^s, v_2^s}+(x_v-x_{v_2^s})\cdot p_{v_1^s, v_2^s} + \dots + (x_{v_{n_s}^s}-x_v)\cdot p_{v_{n_s}^s, w} + (x_v-x_w)\cdot p_{v_{n_s}^s, w} \\ &=  \sum_{i = 1}^{n_s} (x_v-x_{v_i^s}) \cdot  (p_{v_{i-1}^s,v_i^s}-p_{v_i^s,v_{i+1}^s}) + (x_v-x_w)\cdot p_{v_{n_s}^s, w}\\ &=  \sum_{i = 1}^{n_s+1} (x_{v_i^s}-x_v)\cdot q_{v, v_i^s}.
				\end{split} 
			\end{equation} 
			where $v_0^s = v, v_{n_s+1}^s = w ~\text{for all}~ 1\leq s \leq k$.

			Set $\overline{S} := S/(x_v-x_w)$ and define variables $z_v = x_v, z_u = x_u-x_v$ for all $u \not= v$.

			Consider the map $\phi: \overline{S} \rightarrow K[z_1,\dots, z_{\ell}]/(z_w)$ defined by  $x_u \mapsto z_u$. 
			It is a ring isomorphism with inverse \[\varphi(z_u) = \begin{cases} x_u+x_v & u \not= v \\ x_v & u = v. \end{cases} \]
			
			Let $I_s = ((x_v-x_{v_1^s}),\dots,(x_v-x_{v_{n_s}^s}))/(x_v-x_w)$, so by equations (\ref{equ:g_ideals}) \[\overline{g} \in \bigcap_{s = 1}^{k} I_s, ~\text{in particular }\phi(\overline{g}) \in \bigcap_{s = 1}^{k} (z_{v_{1}^s},\dots, z_{v_{n_s}^s}).\]
			
			We now apply Lemma \ref{lem: mongen} for the special case where the variable sets are disjoint, therefore  $\langle Z_{\text{trans}} \rangle = \prod \phi(I_s)$ and thus $\phi(\overline{g}) \in \prod_{s= 1}^{k} \phi(I_s)$. 
			
			Since $\phi$ is an isomorphism, we get that $\overline{g} \in \prod_{s = 1}^{k} I_s$ and it follows that either $g = 0$ or $\deg(g) \geq k$. So, $\overline{g} = 0$ and $\theta(x_v-x_w) \in (x_v-x_w)\cdot S$. \qedhere
			
		\end{proof}
		
		From this proposition, we recover the basis of the derivation module of the braid arrangement, since the braid arrangement is the graphic arrangement  corresponding to the complete graph $K_\ell$, which has connectivity $\ell-1$.

		This result hinging on connectivity tells us that it might be useful to consider minimal separators for the generation of a derivation module.

		\medskip
		For a separator $T$ and a connected component $C \in \mathscr{C}(T)$, recall the derivations  \[\theta_{C}^T = \sum_{j \in C} \prod_{t \in T} (x_j-x_t) \cdot  D_j \] of Definition \ref{def:sep_based_der}. 
		
		\begin{lemma} 
			Let $G$ be a simple graph. The separator based derivation $\theta_{C}^T$ is in the derivation module $D(\A(G))$ for any separator $T$ of $G$ and $C \in \mathscr{C}(T)$.  
		\end{lemma}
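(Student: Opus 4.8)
The plan is to verify the defining condition of $D(\A(G))$ directly. For every edge $\{a,b\}\in E(G)$, corresponding to the hyperplane $\ker(x_a-x_b)$, I must show that $\theta_C^T(x_a-x_b)\in(x_a-x_b)\cdot S$. Since $\theta_C^T=\sum_{j\in C}\prod_{t\in T}(x_j-x_t)\cdot D_j$ and $D_j(x_i)$ equals $1$ when $i=j$ and $0$ otherwise, applying $\theta_C^T$ to $x_a-x_b$ picks out only the summands indexed by $a$ and $b$, and only when these indices lie in $C$. This reduces the statement to a short case analysis according to how the endpoints $a,b$ meet the component $C$.

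First I would dispose of the two easy cases. If neither $a$ nor $b$ lies in $C$, then $\theta_C^T(x_a-x_b)=0$, which is trivially in $(x_a-x_b)\cdot S$. If both $a,b\in C$, then $\theta_C^T(x_a-x_b)=\prod_{t\in T}(x_a-x_t)-\prod_{t\in T}(x_b-x_t)$, which is exactly $f(x_a)-f(x_b)$ for the one-variable polynomial $f(x)=\prod_{t\in T}(x-x_t)$; since $f(x)-f(y)$ is always divisible by $x-y$, this difference lies in $(x_a-x_b)\cdot S$.

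The remaining, and genuinely load-bearing, case is when exactly one endpoint lies in $C$, say $a\in C$ and $b\notin C$ (the case $b\in C$, $a\notin C$ being symmetric up to an overall sign). Here $\theta_C^T(x_a-x_b)=\prod_{t\in T}(x_a-x_t)$, and I need this product to contain the factor $(x_a-x_b)$, i.e.\ I need $b\in T$. This is where the hypothesis that $C$ is a connected component of $G\setminus T$ enters: since $a\in C$ we have $a\notin T$, so if also $b\notin T$ then the edge $\{a,b\}$ survives in $G\setminus T$; as connected components are closed under taking edges, $b$ would then have to lie in $C$, contradicting $b\notin C$. Hence $b\in T$, the factor $(x_a-x_b)$ appears in $\prod_{t\in T}(x_a-x_t)$, and the divisibility follows.

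The main obstacle to watch is precisely this last step: one must argue carefully that an edge with one endpoint in $C$ and the other outside $C$ forces the outside endpoint into the separator $T$ rather than into another component. Once this combinatorial observation is in place, combining the four cases shows $\theta_C^T(x_a-x_b)\in(x_a-x_b)\cdot S$ for every edge, which is exactly the membership $\theta_C^T\in D(\A(G))$.
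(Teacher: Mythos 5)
Your proof is correct and follows essentially the same route as the paper: a direct case analysis of $\theta_C^T(x_a-x_b)$ over the possible positions of the edge endpoints relative to $C$ and $T$. The only difference is that you spell out the step the paper leaves implicit, namely that an edge with exactly one endpoint in the component $C$ must have its other endpoint in $T$ (since $C$ is a connected component of $G\setminus T$), which is a worthwhile clarification but not a different argument.
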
 
		\begin{proof} We need to show $\theta_C^T(x_i-x_j) \in (x_i-x_j)\cdot S$ for all edges $\{i,j\} \in G$.  For $i,j$ both not in $C$ this value is trivial and there are only two cases left to consider: 
			\begin{enumerate} 
				\item $j \in T, i \in C$, then $\theta_{C}^T(x_i-x_j) = \prod_{t \in T} (x_i-x_t)$ which is in $(x_i-x_j)\cdot S$.
				\item $i,j \in C$, then \[\theta_{C}^T(x_i-x_j) = \prod_{t \in T} (x_i-x_t) - \prod_{t \in T} (x_j-x_t)\] which is in $(x_i-x_j)\cdot S$. \qedhere
			\end{enumerate}  
		\end{proof} 
		\medskip  
		
		For the principle which we want to apply in constructing generating sets for general graphs, consider the following example: 
		
		\begin{example} \label{ex:braid_arrangement} 
			
			Recall that $\theta_i = \sum_{j =1}^\ell x_j^i \cdot D_j$. In the braid arrangement case, the coefficients for $\theta_i$, $i < \ell$ to get the derivations $\theta_k$, $k \geq \ell$ are the symmetric polynomials. We define \[\varphi_i := \sum_{k = 1}^\ell \prod_{j < i} (x_i-x_j) \cdot D_j = \sum_{k = 0}^i (-1)^{k} f_{i-k}^{\text{sym}}\cdot \theta_k \] where 
			$f_{i-k}^{\text{sym}}$ is the elementary symmetric polynomial of degree $i-k$ in the variables $x_1,\dots,x_{i-1}$. The derivations $\varphi_i$ are thus generated by the $\theta_i$ and vice versa. We use the identity: 
			\[x^k-y^k = (x-y)\cdot \sum_{i = 1}^k y^{i-1} x^{k-i}.\]
			
			Now for $k \geq \ell$, write 
			\begin{align*} 
				\theta_k &= x_1^k \cdot \theta_{0} + \sum_{i = 1}^\ell (x_i^k-x_1^k) D_i \\ 
				& = x_1^k \cdot \varphi_{0} + \sum_{j = 1}^k x_2^{k-j} x_1^{j-1} \cdot \varphi_{1}  + \sum_{i = 1}^\ell (x_1-x_i) \sum_{j = 1}^k x_1^{j-1}\cdot  (x_i^{k-j} - x_2^{k-j}) \cdot D_i \\ 
			\end{align*} 
			Iteratively, we get that $\theta_k$ is generated by the $\varphi_i$ and thus also by $\theta_i, i < \ell$. 
		\end{example} 
		
		The main goal of the following statements is to pave the way to finding a generating set with infinitely many derivations based on separators. We then prove that it is generating $D(\A(G))$ by the same method we used to prove Proposition \ref{connectedprop} and that we can find a finite subset which generates this specific infinite set. We start by introducing the relevant derivations. Recall that $S = \mathbb{K}[x_1,\dots,x_\ell]$. 
		
		\begin{definition} 
			Let $S[y]$ be the polynomial ring in one variable with polynomials in the $x_i$ as coefficients. For $p \in S[y]$ define the derivation 
			\[\theta_C^{T, p} = \sum_{k \in C} \prod_{t\in T} (x_k-x_t) \cdot p(x_k)\cdot D_k.\] 
		\end{definition}
		The complication in graphs that are not trees or complete is that derivations of this form need to be generated, since they lie in the derivation module $D(\A)$. We first start with the observation that these derivations combined with the $\theta_i$ generate the module:  
		
		\begin{proposition} \label{genprop} 
			Let $\mathcal{A}$ be a graphic arrangement. Then, all derivations $\theta$ contained in the derivation module $D(\mathcal{A}(G))$ are an $S$-linear combinations of the following: 
			\begin{enumerate} 
				\item $\theta_i, i \in \mathbb{Z}_{\geq 0}$ 
				\item $\theta_{C}^{T,p}$, for all  $T \in \mathscr{S}(G), C \in \mathscr{C}(T), p \in S[y]$. 
			\end{enumerate} 
		\end{proposition}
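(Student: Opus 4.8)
The plan is to take an arbitrary $\theta \in D(\mathcal{A}(G))$ and reduce it to zero by subtracting off $S$-linear combinations of the listed derivations, arguing by induction on the polynomial degree using the machinery already developed for Proposition \ref{connectedprop}. By linearity it suffices to treat homogeneous $\theta$ of degree $d$. Write $\theta = \sum_{k=1}^\ell f_k \cdot D_k$ with each $f_k \in S_d$. The first step is to handle the ``braid part'': the derivations $\theta_0, \dots, \theta_{d}$ (type (1)) together already span the degree-$d$ part of $D(\mathcal{A}(K_\ell))$ by Saito's theorem and Example \ref{ex:braid_arrangement}, so after subtracting a suitable combination I may assume $\theta$ vanishes on a chosen reference vertex, say $f_1 = 0$, or more usefully that $\theta$ lies in a complement to the braid submodule. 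The real content is therefore to show that what remains is captured by the separator-based derivations $\theta_C^{T,p}$.

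The key structural observation to exploit is the defining condition itself: for every non-edge $\{v,w\}$ of $G$, the same argument as in Proposition \ref{connectedprop} shows $\theta(x_v - x_w) \in (x_v - x_w)\cdot S$, and this forces relations among the coefficients $f_k$ across the connected components of $G \setminus T$ for each separator $T$. Concretely, I would fix a separator $T$ and a component $C \in \mathscr{C}(T)$ and isolate the sub-sum $\sum_{k \in C} f_k D_k$. The edges from $C$ into $T$ impose that for $k \in C$ and $t \in T$ with $\{k,t\} \in E$, the coefficient $f_k$ is divisible (modulo the ideal generated by the $x_k - x_{k'}$ for $k,k' \in C$ adjacent) by the corresponding linear forms; iterating over all $t \in T$ and using a monomial-ideal-intersection argument in the spirit of Lemma \ref{lem: mongen} should show that the $C$-part of $\theta$ is forced to have each $f_k$ divisible by $\prod_{t \in T}(x_k - x_t)$, which is exactly the shape of $\theta_C^{T,p}$ with $p$ absorbing the residual factor of degree $d - |T|$.

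The inductive scaffolding I envision is the following. Order the separators and components, and peel off one $\theta_C^{T,p}$ at a time so as to kill the coefficient $f_k$ on the vertices of $C$ farthest from $T$; since $\theta_C^{T,p}$ contributes the term $\prod_{t\in T}(x_k - x_t)\cdot p(x_k)$ to coefficient $k$ and nothing outside $C$, subtracting it strictly reduces the number of nonzero coefficients (or their degrees) without disturbing the already-cleared vertices. Running this down to the minimal separators and smallest components, combined with the degree bound from Proposition \ref{connectedprop} (which guarantees that below the connectivity threshold only braid derivations occur), should terminate with the zero derivation, establishing that $\theta$ was an $S$-linear combination of types (1) and (2).

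The main obstacle I anticipate is the bookkeeping in the divisibility step: establishing that the divisibility constraints coming from the \emph{many} separators containing a given vertex combine correctly to produce precisely the product $\prod_{t \in T}(x_k - x_t)$, rather than some larger or incompatible ideal, and doing so coherently across a whole component $C$ whose internal edges also constrain the $f_k$. This is exactly where Lemma \ref{lem: mongen} must be invoked in its general (non-disjoint) form, and care is needed because the variable sets arising from different edges overlap; controlling that overlap, and verifying that the reduction genuinely decreases a well-chosen complexity measure at each step so the process halts, is the delicate heart of the argument.
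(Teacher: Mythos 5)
There is a genuine gap, and it sits exactly where you locate ``the delicate heart'' of your argument: the coefficient-wise divisibility claim is false. For an edge $\{k,t\}$ with $k \in C$, $t \in T$, the defining condition only gives $f_k \equiv f_t \pmod{(x_k - x_t)}$, and $f_t$ is unconstrained, so nothing forces $f_k$ into $\prod_{t\in T}(x_k - x_t)\cdot S$, even modulo the internal edges of $C$. Concretely, on the $4$-cycle $1\text{--}2\text{--}3\text{--}4\text{--}1$, normalize as you suggest by passing to $\theta_1 - x_1\theta_0$ (so $f_1 = 0$): then $f_3 = x_3 - x_1$, which is not divisible by $(x_3-x_2)(x_3-x_4)$ for the separator $T = \{2,4\}$, $C = \{3\}$ -- the remainder is still of type (1), and there is no canonical ``complement to the braid submodule'' since $D(\mathcal{A}(G))$ need not split. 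Your opening claim that ``for every non-edge $\{v,w\}$ of $G$, the same argument as in Proposition \ref{connectedprop} shows $\theta(x_v-x_w) \in (x_v-x_w)\cdot S$'' is also wrong: that proposition only applies in degrees below the connectivity, and for non-edges the membership genuinely fails in higher degrees (e.g.\ $(x_1-x_2)D_1$ on the path $1\text{--}2\text{--}3$ gives $\theta(x_1-x_3) = x_1-x_2 \notin (x_1-x_3)S$); this failure is precisely why separator-based derivations are needed at all. Finally, your termination measure does not decrease: subtracting $\theta_C^{T,p}$ perturbs \emph{every} coefficient in $C$, and a given vertex lies in components of many separators, so ``number of nonzero coefficients'' can go up and previously cleared vertices get disturbed.

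The paper's proof avoids coefficient-wise analysis entirely by inducting on the number of non-edges rather than on degree. Starting from $K_\ell$ (Saito), one removes a single edge $e_0 = \{v,w\}$ from $G$ to form $G'$; for $\theta \in D(\mathcal{A}(G'))$, only the one condition at $H_0 = \ker(x_v-x_w)$ is in question. Lemma \ref{lem: mongen} is applied not to the individual $f_k$ but to the single defect polynomial $\overline{g} = \overline{\theta(x_v-x_w)}$ in $S/(x_v-x_w)$, which lies in the intersection of the path ideals over \emph{all} $v$--$w$ paths in $G'$; the non-disjoint form of the lemma expresses $\overline{g}$ via transversal monomials $\mathbf{z}_U$, each transversal $U$ contains a minimal $(v,w)$-separator $T_U$, and subtracting $\sum_{U} \theta_{C_U}^{T_U,p_U}$ yields a derivation satisfying the condition at $H_0$ too, i.e.\ an element of $D(\mathcal{A}(G))$, to which the inductive hypothesis applies; one then checks $\mathscr{M}_{\mathcal{A}(G)} \subseteq \langle \mathscr{M}_{\mathcal{A}(G')}\rangle$ (demoted separators, dropped connectivity). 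You correctly identified the two key ingredients -- the non-disjoint case of Lemma \ref{lem: mongen} and the fact that a transversal of all $v$--$w$ paths contains a minimal $(v,w)$-separator is implicit in your setup -- but you deploy them at the wrong level (all coefficients of $\theta$ at once, over all separators simultaneously) where the divisibility they would need does not hold; the paper's one-non-edge-at-a-time structure is what replaces your missing termination argument.
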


		\begin{proof} 
			Denote the set of the listed elements as $ \mathscr{M}_{\A(G)}$.  
			
			We prove the claim by induction over the number of non-edges in the graph: We know that $D(\A(K_\ell))$ (zero non-edges) is generated by the list, in that case, there are no minimal separators. So assume the claim has been shown for $\A(G)$ and consider $\A(G)'$ with distinguished hyperplane $H_0$ corresponding to the edge $e_0 = \{v,w\}$. Define $G' := G \backslash \{e_0\}$ as the graph with $e_0$ removed, such that $\A(G)' = \A(G')$. 
			
			\medskip 
			All elements of $ \mathscr{M}_{\A(G')}$ clearly lie in $D(\mathcal{A}(G'))$. 
			Let $\theta \in D(\A(G'))$ and let $m$ be the number of (not necessarily disjoint) paths $P_1,\dots,P_m$ between $v$ and $w$ in $G'$. For each $1\leq i \leq m$, we get a representation for  $g := \theta(x_v-x_w)$ by means of 
			
			\begin{align*} 
				\theta(x_v-x_w) &= (x_v-x_{v_1^i})\cdot p_{v, v_1^i} + \dots + (x_{v_{n_i}^i}-x_w)\cdot p_{v_{n_i}^i, w} \mod (x_v-x_w)\\ &= \sum_{j = 1}^{n_i} (x_v-x_{v_j^i})\cdot q_{v, v_j^i}
			\end{align*} 
			
			In the notation of Proposition \ref{connectedprop}, we define $z_u := (x_v-x_u)$ and again consider the map
			$\phi$. It follows in a similar fashion that  \[\overline{g} \in \bigcap_{s = 1}^{m} I_s,\] in particular $\phi(g) = \bigcap_{s = 1}^{m} (z_{v_1^s},\dots, z_{v_{n_s}^s})$. 
			
			In this case, we do not have disjointness of the sets anymore. For a set of vertices $U$ define the monomial \[\mathbf{z}_U := \prod_{u \in U} z_u\]and the set of subsets $M = \{U\subseteq V(G')~\vert~\text{for all } i: U \cap P_i \not= \emptyset\}$.
			
			By Lemma \ref{lem: mongen}, $\overline{g}$ is generated by the monomials $\mathbf{z}_U, U\in M$ and thus it can be written as $g = \sum_{U \in M} \mathbf{z}_U \cdot q_U $ for some polynomials $q_U \in S[y]$. 
			
			Now by definition of the sets $U$, each of them contains a minimal separator $T_U \subseteq U$, namely a $(v,w)$-separator. Denote by $C_U$ the connected component of $G'\backslash T_U$ containing $v$ and define \[p_U = \prod_{u \in U\backslash T_U} z_u \cdot q_U \in S[y].\]
			
			Thus, we can consider the derivation $\theta' = \theta - \sum_{U \in M} \theta_{C_U}^{T_U, p_U}$. Per definition, we have that $\overline{\theta(\alpha_{H_0})} = \overline{g} = \overline{\sum_{U \in M} \theta_{C_U}^{T_U, p_U}(\alpha_{H_0})}$ in $\overline{S}$. Therefore, it holds that $\theta'(\alpha_{H_0}) \in \alpha_{H_0}\cdot S$ and as a linear combination of two derivations in $D(\A'(G))$, $\theta$ is itself in that module. 
			
			Hence, $\theta'\in D(\A(G))$, which by induction is generated by $ \mathscr{M}_{\A(G)}$. By closer examination, we see that the elements of $\mathscr{M}_{\A(G)}$ are either also in $\mathscr{M}_{\A(G')}$ or generated by that set, since the only thing that can happen is that one of the vertices is part of a separator and by removing the edge between $v$ and $w$, the separator is not minimal any more.
			
			Analogously for the connectivity: $G'$ might have connectivity one less that $G$, but this means that there is a minimal separator $T$ with $\vert T \vert =  \kappa(G)$, such that $\theta_{\kappa(G)}$ is generated by $\theta_0,\dots,\theta_{\kappa(G)-1}$ and $\sum_{C\in \mathscr{C}(T)} \theta_C^T$. 
			
			So, $\mathscr{M}_{\A(G)} \subset \langle \mathscr{M}_{\A(G)'}\rangle$ 
			and thus $\theta \in \langle \mathscr{M}_{\A'(G)}\rangle = D(\A'(G))$. \qedhere
			
		\end{proof} 
		
		We can rephrase this proposition in an elegant way by introducing a new element, the \emph{empty vertex}, that is not connected to any other vertex and not labelled. 
		
		In this way $\theta_0 = \theta_{G}^\emptyset, \theta_k = \theta_{G}^{\emptyset, x^k} $ and the proposition can be rephrased using only the second type of derivation.
		
		 We illustrate Proposition \ref{genprop} with a very natural example class of derivations.
		
		\begin{example}\label{ex:nui} 
			Define the \emph{neighbourhood derivations} of $D(\mathcal{A}(G))$ as \[\sigma_i = \prod_{j \in N(i)} (x_i-x_j) \cdot D_i\] for $i \in V(G)$. If $\{i,j\}\in E(G)$ for all $j \not= i$, then $\sigma_i$ is generated by the $\theta_k, k \in \mathbb{N}$. If that is not the case, $\sigma_i$ is generated by the $\theta_C^{T,p}$: Let $T\subseteq N(i)$ be a minimal separator (it exists, since $N(i) $ is separating $i$ from at least one other vertex), then $\sigma_i = \theta_{i}^{T, q_i}$ with \[q_i(y) = \prod_{j \in N(i)\backslash T} (y-x_j).\]
			
		\end{example}

		The sets of Proposition \ref{genprop} are not finite, so the next step is to see how we can generate these through a finite set. We start with an example to illustrate the task.

		\begin{example} \label{ex:desc_chain} 
			
			In order to generate the derivations of the form $\theta_C^{T,p}$ for general graphs we cannot rely on generators arising from minimal separators only, to see this, consider the graph $G$ on 4 vertices in Figure \ref{fig:ex1}. 
			
			This graph is chordal and has only one minimal separator, namely the set containing only the  vertex 4, yielding two connected components. Thus $\kappa(G) = 1$ and we can construct three derivations of the form previously established ($\theta_0, \theta_{\{1\}}^{\{4\}}, \theta_{\{2,3\}}^{\{4\}}$). Yet any basis needs to contain exactly four elements.  
			
			One possible choice of basis is the following set of derivations, written as the transpose of the coefficient matrix  (see Def. \ref{def:matcoef}):

			\[	\begin{bmatrix} 
				1 & 1 & 1 & 1 \\
				(x_1-x_4) & 0 & 0 & 0 \\ 
				0 & (x_2-x_4) & (x_3-x_4) & 0 \\
				0 & (x_2-x_4)\cdot (x_2-x_3)& 0 & 0 
			\end{bmatrix} 
			\] 
			We can check that this is a basis using Saito's criterion. Note that the first three derivations are the ones described above. The choice of $(x_2-x_4)\cdot (x_2-x_3)\cdot D_2$ as fourth generator may seem arbitrary in the sense that we could also have chosen $(x_3-x_4)\cdot (x_3-x_2)\cdot D_3$. 
		\end{example} 
		We want to formalize this type of derivation with the following definition. 
		
		\begin{figure}[H]
			\includegraphics[width=0.2\textwidth]{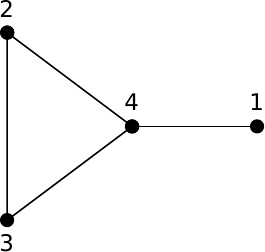} 
			\caption{Example of a chordal graph with 4-dimensional $\A(G)$-derivation module} 
			\label{fig:ex1}
		\end{figure}

		\begin{definition}\label{separatorchain} 
			Let $T$ be a minimal separator of a graph $G$ and $C\in \mathscr{C}(T)$ with $\vert C \vert = k$. 
			
			Let $v_1,\dots,v_k$ be an ordering on the vertices of $C$ and define the sets $C_i := \{v_1,\dots,v_i\}$ and set $T_i = \bigcup_{j = 1}^i N(v_j)\backslash C_i$. 
			Define the derivations \[\theta_{C_i}^{T_i} = \sum_{j \in C_i} \prod_{t \in T_i} (x_j-x_t) \cdot D_j.\]
			Call $(T_i, C_i)_{1\leq i \leq k}$ a descending chain of $(T,C)$. 
		\end{definition}

		\begin{remark} The definition of the $T_i$ and $C_i$ is dependent of the chosen ordering, thus in general, there is no unique descending chain for a pair $(T,C)$. In Example \ref{ex:desc_chain}, the two derivation pairs $(x_2-x_4)\cdot (x_2-x_3)\cdot D_2$ and $(x_3-x_4)\cdot (x_3-x_2)\cdot D_3$ form two different descending chains with the pair $(T,C) = (\{4\}, \{2,3\})$. 
		\end{remark} 
		
		\begin{lemma} \label{polynomial_generation} 
			Let $T$ be a minimal separator of a graph $G$ and $C\in \mathscr{C}(T)$ with $\vert C \vert = k$. 
			
			Then, with the previous definition, the set $\{\theta_{C_i}^{T_i}, 1 \leq i \leq k\}$ generates all derivations in $D(\A(G))$ of the form $\theta_{C}^{T, p(x)}(x_m) = \prod_{t\in T} (x_m-x_t) \cdot p(x_m)$ for all $p \in S[y]$. 
		\end{lemma}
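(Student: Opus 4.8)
The plan is to produce, for each $p \in S[y]$, explicit coefficients in $S$ expressing $\theta_C^{T,p}$ as a combination of the $\theta_{C_i}^{T_i}$. First I would reduce to the case $p = y^d$: writing $p = \sum_d c_d y^d$ with $c_d \in S$ gives $\theta_C^{T,p} = \sum_d c_d\,\theta_C^{T,y^d}$ as an $S$-linear combination, so it suffices to generate each $\theta_C^{T,y^d}$. Abbreviate $\tau_i := \theta_{C_i}^{T_i}$ and $L_m := \prod_{t\in T}(x_{v_m}-x_t)$, so that the target $\theta_C^{T,y^d}$ has coefficient $L_m x_{v_m}^d$ at $v_m$. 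Since $\tau_i$ is supported on $C_i = \{v_1,\dots,v_i\}$, the coefficient matrix $M_{ij} := \prod_{t\in T_i}(x_{v_j}-x_t)$ (for $j \le i$, and $0$ for $j > i$) is triangular in the chosen ordering. I would therefore solve $\sum_i a_i \tau_i = \theta_C^{T,y^d}$ by back-substitution on $j = k, k-1, \dots, 1$: having fixed $a_{j+1},\dots,a_k$, the coefficient equation at $v_j$ reads $a_j M_{jj} = R_j$ with $R_j := L_j x_{v_j}^d - \sum_{i>j} a_i M_{ij}$, and the whole argument reduces to showing $M_{jj}\mid R_j$ in $S$ at each step, so that $a_j \in S$.

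The combinatorial input is the monotonicity of the boundary sets $T_i = \bigl(\bigcup_{s\le i} N(v_s)\bigr)\setminus C_i = N(C_i)\setminus C_i$ (each $\tau_i$ lies in $D(\A(G))$ by the same verification as in the preceding lemma, since $T_i$ contains the full boundary $N(C_i)\setminus C_i$). I would first record: if $t \in T_j$ and $i \ge j$ with $t \notin C_i$, then $t \in T_i$; indeed $t$ is adjacent to $C_j \subseteq C_i$ and lies outside $C_i$, hence $t \in N(C_i)\setminus C_i = T_i$. In particular $T_k = N(C)\setminus C \subseteq T$, because every edge leaving the component $C$ lands in the separator $T$; this settles the top step $j = k$, where $R_k = L_k x_{v_k}^d$ and $M_{kk} = \prod_{t\in T_k}(x_{v_k}-x_t)$ divides $L_k x_{v_k}^d$ since $T_k \subseteq T$. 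As the factors $(x_{v_j}-x_t)$ for distinct $t$ are pairwise non-associate irreducibles of the UFD $S$, it suffices to verify $(x_{v_j}-x_t)\mid R_j$ for each single $t \in T_j$ separately.

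I would then split on the location of $t \in T_j$. If $t \in T$, then $(x_{v_j}-x_t)$ divides $L_j$, and by the monotonicity above $t \in T_i$ for every $i > j$ (since $t \in T$ never lies in any $C_i$), so $(x_{v_j}-x_t)$ divides every $M_{ij}$ as well; hence it divides $R_j$. The genuinely new case is $t = v_r$ with $r > j$, i.e. a vertex of $C\setminus C_j$ adjacent to $C_j$; here $(x_{v_j}-x_{v_r})$ does \emph{not} divide $L_j$, and I must extract the cancellation from the sum. Applying the substitution $\sigma\colon x_{v_j}\mapsto x_{v_r}$ and using that the partial sum already matches the target at $v_r$ (this is where the top-down order of the back-substitution is essential), one has $L_r x_{v_r}^d = \sum_{i\ge r} \sigma(a_i) M_{ir}$; combined with $\sigma(M_{ij}) = M_{ir}$ for $i \ge r$ (valid because $v_r \notin T_i$ once $r \le i$), the terms with $i \ge r$ cancel against $\sigma(L_j x_{v_j}^d) = L_r x_{v_r}^d$, while for $j < i < r$ the monotonicity forces $v_r \in T_i$, so $(x_{v_j}-x_{v_r})\mid M_{ij}$ and $\sigma(M_{ij}) = 0$. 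Thus $\sigma(R_j) = 0$, i.e. $(x_{v_j}-x_{v_r})\mid R_j$, completing the divisibility.

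The main obstacle is precisely this last case: unlike $t \in T$, a separator-internal factor $x_{v_j}-x_{v_r}$ cannot be read off from $L_j$, and its divisibility is a global consequence of how the already-determined coefficients $a_i$ propagate the values of $p$ along the chain. Making this rigorous requires the bookkeeping that (i) the back-substitution is carried out from $v_k$ down to $v_1$, so that the equation at $v_r$ is available when treating $v_j$, and (ii) the sets $T_i$ grow monotonically on vertices outside $C_i$, which guarantees both that $\sigma$ annihilates the intermediate terms and that it identifies the $i\ge r$ coefficient data with that at $v_r$. Once all $a_j$ are shown to be polynomials, the identity $\theta_C^{T,y^d} = \sum_i a_i \tau_i$ finishes the argument, and the reduction of the first paragraph extends it to arbitrary $p \in S[y]$.
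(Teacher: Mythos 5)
Your proof is correct, and it takes a genuinely different route from the paper's, even though both begin with the same reduction to monomials $p=y^d$ and both exploit the triangular structure of the chain. The paper's proof is an induction that never leaves the chain family: multiplying $\theta_{C_i}^{T_i}$ by $x_{v_i}^m$ and using the identity $x^m-y^m=(x-y)\sum_{s} y^{s-1}x^{m-s}$ together with the containment $T_{i-1}\subseteq T_i\cup\{v_i\}$, the error term is again a derivation of the form $\theta_{C_{i-1}}^{T_{i-1},q'}$ supported on $C_{i-1}$, so the inductive invariant (\emph{the remainder is a chain derivation with a polynomial twist}) makes all divisibility automatic and the proof closes in a few lines. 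You drop that invariant: you solve the triangular system by back-substitution from $v_k$ down to $v_1$ and must then establish by hand that each remainder $R_j$ is divisible by $M_{jj}$, which you do factor-by-factor in the UFD $S$ via the evaluation $x_{v_j}\mapsto x_{v_r}$, using the boundary monotonicity ($t\in T_j$, $t\notin C_i$ for $i\geq j$ implies $t\in T_i$) and the fact that the equations at rows $r>j$ already hold. I checked the delicate points and they all go through: the split $T_j\subseteq T\cup(C\setminus C_j)$ is exhaustive, $v_r\in T_i$ for $j<i<r$ kills the intermediate terms under $\sigma$, and $\sigma(M_{ij})=M_{ir}$ for $i\geq r$ since $v_j,v_r\in C_i$ forces both out of $T_i$. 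Your two combinatorial inputs are essentially the evaluation-level shadow of the paper's single containment $T_{i-1}\subseteq T_i\cup\{v_i\}$. What each approach buys: the paper's is shorter and shows conceptually that the chain family is closed under these reductions; yours produces explicit coefficients $a_i\in S$ for the expression $\theta_C^{T,y^d}=\sum_i a_i\theta_{C_i}^{T_i}$ (potentially useful for the degree bookkeeping in Section 4), works for arbitrary orderings without needing $C_i$ connected, and handles explicitly the possibility $T_k=N(C)\setminus C\subsetneq T$ at the top step, a point the paper leaves implicit.
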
 
		\begin{proof} 
			Note that $T_{i-1} \subseteq T_i\cup \{v_i\}$.
			We only need to show that the set generates all elements $\theta_C^{T, y^m}$ for all $m \in \mathbb{N}$. We prove this by induction over the sequence of subsets, namely, we prove that for every $p_i \in S[y]$, there exist $p_0,\dots, p_{i+1} \in S[y]$, such that $\theta_{C_i}^{T_i, p_i} \in \langle \theta_{C_j}^{T_j, p_j} 0 \leq j \leq i-1, \theta_{C_i}^{T_i} \rangle$. 
			
			First, $C_1$ only contains one vertex, thus the derivation only has one non-zero entry and we can multiply it with any polynomial. 
			Now consider $\theta_{C_i}^{T_i}$ and the vertex $v_i$ with variable $x_i$. 
			Compute 
			\[x_i^m \cdot \theta_{C_i}^{T_i} = \theta_{C_i}^{T_i, y^m} + \sum_{j \in C_i} \prod_{t \in T_i} (x_j-x_t)\cdot (x_j^m-x_i^m)D_j\] 
			The second part can be written as 
			\begin{align*}
				\sum_{j \in C_i} \prod_{t \in T_i} (x_j-x_t)\cdot (x_j^m-x_i^m) &=  \sum_{j \in C_i} \prod_{t \in T_i} (x_j-x_t)\cdot (x_j-x_i)\cdot q(x_j)  \\ &= \sum_{j \in C_{i-1}} \prod_{t \in T_i} (x_j-x_t)\cdot (x_j-x_i)\cdot q(x_j) \\ &= \sum_{j \in C_{i-1}} \prod_{t \in T_{i-1}} (x_j-x_t)\cdot q'(x_j)\\ 
			\end{align*} 
			
			where $q,q'$ are some polynomials in $S[y]$ and the last equality comes from the fact that $T_{i-1}$ contains $j$ and potentially less elements than $T_i$, which can be wrapped up in $q'$. 
			
			This is a derivation that is, by induction, generated by $\theta_{C_j}^{T_j}$ for $j \leq i-1$ and we are finished. 
		\end{proof}
		\bigskip

		\medskip
		We now summarize which elements need to be in a set of generators according to our previous remarks and check the remaining condition according to Proposition \ref{genprop} and Lemma \ref{polynomial_generation}. 
		\begin{corollary} \label{cor:thetai}
			Let $G$ be a $(k+1)$-connected graph. Any set $\Theta_G$ containing $\theta_0,\dots,\theta_{k}$ and a descending chain for every minimal separator $T$ of $G$ and every $C\in \mathscr{C}(T)$ generates all derivations of the form $\theta_i, i > k$. 
		\end{corollary}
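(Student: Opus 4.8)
The plan is to reduce the whole statement to a single symmetric-polynomial identity and then induct on the polynomial degree. First I would fix a minimal separator $T$ of minimum cardinality $|T| = \kappa(G) = k+1$; such a $T$ exists because a minimum-size vertex cut is automatically a minimal $(a,b)$-separator for any $a,b$ lying in distinct components of $G \setminus T$ (a proper subset separating $a,b$ would be a smaller cut, contradicting minimality of $\kappa(G)$). Writing $m = |T| = k+1$ and letting $e_i$ denote the $i$-th elementary symmetric polynomial in the variables $\{x_t : t \in T\}$, the heart of the argument is the computation, for every $r \geq 0$,
\[
\sum_{C \in \mathscr{C}(T)} \theta_C^{T,\, y^r} \;=\; \sum_{j \notin T} \prod_{t \in T}(x_j - x_t)\, x_j^r\, D_j \;=\; \sum_{i=0}^{m} (-1)^i\, e_i\, \theta_{m-i+r}.
\]
The first equality holds because $\bigcup_{C \in \mathscr{C}(T)} C = V(G) \setminus T$. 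For the second I expand $\prod_{t \in T}(x_j - x_t) = \sum_{i=0}^{m} (-1)^i e_i\, x_j^{m-i}$, extend the middle sum harmlessly to all $j$ (the terms with $j \in T$ vanish, carrying the factor $x_j - x_j = 0$), and then recognise $\sum_{j=1}^{\ell} x_j^{m-i+r} D_j = \theta_{m-i+r}$.

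Isolating the $i=0$ term, which is exactly $\theta_{m+r}$, the identity rearranges to
\[
\theta_{m+r} \;=\; \sum_{C \in \mathscr{C}(T)} \theta_C^{T,\, y^r} \;-\; \sum_{i=1}^{m} (-1)^i\, e_i\, \theta_{m+r-i}.
\]
Here the coefficients $e_i$ lie in $S$, the derivations $\theta_{m+r-i}$ all have strictly smaller degree than $\theta_{m+r}$, and each $\theta_C^{T,\, y^r}$ lies in the submodule generated by the descending chain of $(T,C)$ by Lemma \ref{polynomial_generation} applied with $p = y^r$. Since $\Theta_G$ contains a descending chain for every $C \in \mathscr{C}(T)$, the entire right-hand side apart from the lower-degree $\theta_j$ is generated by $\Theta_G$.

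With this displayed identity in hand I would finish by induction on $N \geq k+1$, proving $\theta_N \in \langle \Theta_G \rangle$. The base of the induction is immediate: $\theta_0, \dots, \theta_k$ are by hypothesis contained in $\Theta_G$. For the inductive step, given $\theta_0, \dots, \theta_{N-1} \in \langle \Theta_G\rangle$, I apply the identity with $r = N - m = N - (k+1) \geq 0$; then every summand $\theta_{m+r-i} = \theta_{N-i}$ with $1 \leq i \leq m$ has degree between $r \geq 0$ and $N-1$, hence is available by the inductive hypothesis, while the separator contributions are generated as explained. Thus $\theta_N \in \langle \Theta_G \rangle$, and the induction shows every $\theta_i$ with $i > k$ is generated.

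The main obstacle, and the step I would be most careful about, is the clean bookkeeping of the symmetric-function identity together with the correct invocation of Lemma \ref{polynomial_generation}: the descending chain of $(T,C)$ is built from the possibly smaller boundary set $N(C)\setminus C \subseteq T$, and one must trust the lemma to absorb the missing factors $\prod_{t \in T \setminus (N(C)\setminus C)}(x_k - x_t)$ into the polynomial part. The second delicate point is that the induction is only well-founded because $T$ is chosen of minimum size $k+1$, which is precisely what makes the smallest instance $\theta_{k+1}$ (the case $r=0$) depend solely on $\theta_0, \dots, \theta_k$; a separator of larger cardinality would leave a gap in the lowest degrees.
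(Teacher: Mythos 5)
Your proof is correct and follows essentially the same route as the paper: the paper's decomposition of $\theta_\Sigma = \sum_{C\in\mathscr{C}(T)}\theta_C^T$ into homogeneous pieces $\theta_\Sigma^{(0)}+\dots+\theta_\Sigma^{(k+1)}$, with the lower pieces absorbed by $\theta_0,\dots,\theta_k$, is exactly your elementary-symmetric-polynomial identity (including the tacit extension of the sum to $j\in T$, where the product $\prod_{t\in T}(x_j-x_t)$ vanishes), and your induction on $N$ with $r=N-(k+1)$ is the paper's closing ``with the same argument as above'' step, invoking Lemma \ref{polynomial_generation} with $p=y^r$ in the same way. If anything, your write-up is more explicit than the paper's, which leaves both the expansion and the induction implicit.

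One small caveat: your opening step presupposes the existence of a minimal separator of cardinality $\kappa(G)=k+1$, which fails exactly when $G$ is complete, since $K_\ell$ has no separators at all; the paper disposes of that case separately by citing Example \ref{ex:braid_arrangement} (where $\theta_m$ for $m\geq \ell$ is generated from the $\theta_i$, $i<\ell$, via the $\varphi_i$), so you should either add the same reference or state at the outset that $G$ is assumed non-complete, noting that for non-complete $G$ a minimum-size vertex cut between two non-adjacent vertices is indeed a minimal separator, as you correctly argue.
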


		Note that using the notion of an empty vertex, we get that $\theta_1,\dots,\theta_k$ is just the beginning of a chain for the separator $\emptyset$ and the chain can be amended by any chain of a smallest minimal separator in $G$, such that this would not need to be specified in the definition. 
		
		\begin{proof} 
			The case of $G$ being the complete graph was explored in Example \ref{ex:braid_arrangement}. 
			Let $G$ be non-complete. Since $\kappa(G) = k+1$, this means that there exists a minimal separator $T$ with $k+1$ vertices. 
			The sum of all derivations $\theta_C^T, C\in \mathscr{C}(T)$ is \[\theta_\Sigma = \sum_{i \in G\backslash T} \prod_{t\in T} (x_i-x_t) D_i\]
			which is just the derivation $\theta_{G\setminus T}^{T, p}$ with the polynomial \[ p(y) = \prod_{t\in T} (y-x_t) \in S[y] \] in every entry. 
			
			We can rewrite this homogeneous derivation as $\theta_\Sigma = \theta_\Sigma^{(0)} + \dots + \theta_\Sigma^{(k+1)}$, where $\theta_\Sigma^{(i)} $ with $i < k+1$ are generated using $\theta_i, 0 \leq i \leq k$. Subtracting them, we are left with the derivation $\theta_\Sigma^{(k+1)}$ which has $x_i^{k+1}$ in every entry, we thus have generated $\theta_{k+1}$. 
			
			From Lemma \ref{polynomial_generation}, we know that all derivations of the form $\sum_{C \in \mathscr{C}(T)} \theta_C^{T, x^p}$ are generated by $\Theta_G$ and with the same argument as above, we get that all $\theta_p$ are generated by this set. \qedhere
			
		\end{proof} 
		
		Finally, we have the following: 
		\begin{theorem}\label{generating_corollary} 
			Let $G$ be a graph and $\mathcal{A}(G)$ the associated graphic hyperplane arrangement. Then any set $\Theta_G$ fulfilling the conditions of Corollary \ref{cor:thetai} is a generating set of $D(\mathcal{A}(G))$. 
		\end{theorem}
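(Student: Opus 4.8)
The plan is to deduce the statement by collating the three preceding results, without re-running an induction. Proposition \ref{genprop} already tells us that $D(\A(G))$ is generated as an $S$-module by the (infinite) family consisting of all power derivations $\theta_i$, $i \in \mathbb{Z}_{\geq 0}$, together with all separator-based derivations $\theta_C^{T,p}$ with $T \in \mathscr{S}(G)$, $C \in \mathscr{C}(T)$ and $p \in S[y]$. Writing $\langle \Theta_G \rangle$ for the $S$-submodule generated by $\Theta_G$, it therefore suffices to prove that each member of this family lies in $\langle \Theta_G \rangle$. The reverse inclusion $\langle \Theta_G \rangle \subseteq D(\A(G))$ holds once one checks that each chain derivation $\theta_{C_i}^{T_i}$ is itself an $\A(G)$-derivation, which follows exactly as for the derivations $\theta_C^T$, using that every neighbour outside $C_i$ of a vertex of $C_i$ lies in $T_i$.

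For the power derivations, set $\kappa(G) = k+1$. The derivations $\theta_0,\dots,\theta_k$ are in $\Theta_G$ by hypothesis, and Corollary \ref{cor:thetai} — which applies precisely because $\Theta_G$ contains these together with a descending chain for every minimal separator and every component — shows that every $\theta_i$ with $i > k$ lies in $\langle \Theta_G \rangle$. For the separator-based derivations, I would fix a pair $(T,C)$ and use the descending chain $(T_i,C_i)_{1 \leq i \leq |C|}$ that $\Theta_G$ contains by assumption. Lemma \ref{polynomial_generation} says precisely that the finite set $\{\theta_{C_i}^{T_i} : 1 \leq i \leq |C|\} \subseteq \Theta_G$ generates all $\theta_C^{T,p}$ with $p \in S[y]$, so each such derivation lies in $\langle \Theta_G \rangle$. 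Letting $(T,C)$ range over all minimal separators and their components then covers the entire second family of Proposition \ref{genprop}, and the two families together exhaust its generating list, so $\langle \Theta_G \rangle = D(\A(G))$.

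I expect the synthesis itself to be routine; the one point deserving attention is the compatibility of the separator at the top of a descending chain with the one indexing Proposition \ref{genprop}. By construction the top index set $T_{|C|}$ is contained in the minimal separator $T$ but may be a proper subset of it. This is harmless: since $\prod_{t \in T}(x_m - x_t) = \prod_{t \in T_{|C|}}(x_m - x_t)\cdot \prod_{t \in T \setminus T_{|C|}}(x_m - x_t)$ and the second factor depends only on $x_m$, it can be absorbed into the polynomial parameter, so that $\theta_C^{T,p} = \theta_C^{T_{|C|}, p'}$ for a suitable $p' \in S[y]$. Hence the derivations produced by the chain via Lemma \ref{polynomial_generation} are exactly the ones demanded by Proposition \ref{genprop}, and the argument closes.
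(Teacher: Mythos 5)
Your synthesis is correct and takes essentially the same route as the paper: Theorem \ref{generating_corollary} is stated there without a separate proof, as the immediate combination of Proposition \ref{genprop}, Lemma \ref{polynomial_generation} and Corollary \ref{cor:thetai}, which is precisely your argument. Your two additional checks --- that the chain derivations $\theta_{C_i}^{T_i}$ lie in $D(\A(G))$ because every neighbour of $C_i$ outside $C_i$ is in $T_i$, and that any discrepancy between $T_{|C|}$ and $T$ is absorbed by passing to $p'(y) = p(y)\cdot\prod_{t\in T\setminus T_{|C|}}(y-x_t)$ --- are both sound and make the deduction, if anything, more careful than the paper's implicit one.
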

		
		\begin{example} \label{ex:trees} 
			For a tree $\mathscr{T}$ it holds that $\mathscr{S}(\mathscr{T}) = \{v \in V(\mathscr{T})\vert \deg(v) \geq 2\}$.  
			
			Choose a vertex $v_0 \in \mathscr{T}$ with $\deg(v_0) \geq 2$, define $T_0 = \{v_0\}$ and denote the connected component of a minimal separator $T \not= T_0$ of $\mathscr{T}$ that contains $v_0$ as $C_0^T$. We can write the basis of $D(\A(\mathscr{T}))$: 
			
			\[\Theta_{\mathscr{T}} = \{\theta_0\}\cup \big\{\theta_C^{T_0}~\vert ~C \in \mathscr{C}(T_0)\big\}\bigcup_{T \in \mathscr{S}(\mathscr{T})\backslash{T_0}} \big\{\theta_C^{T} ~\vert~C\in \mathscr{C}(T)\backslash C_0^T\big\}  \]

			We want to use Saito's criterion (see Theorem \ref{thm:saito}).  
			First, we observe that $\vert \Theta_{\mathscr{T}}\vert = \vert V(\mathscr{T}) \vert$, since we can consider $\mathscr{T}$ to be rooted with root $v_0$ and the set $\big\{\theta_C^{T} ~\vert~C\in \mathscr{C}(T)\backslash C_0^T\big\}$ has size equal to the number of descendants of $v_0$. 
			
			One verifies that we thus get $n-1$ many derivations and together with $\theta_0$ (in this way corresponding to $v_0$), they form a $n\times n$-matrix.

			To compute the determinant, we use the Leibnitz formula and we note the following: 
			\begin{enumerate} 
				\item for each leaf  $z$ of the tree, there is a derivation of the form 
				$(x_w-x_z)\cdot D_z$, so the only non-zero entry in the respective (let's say $y$-th) row of the matrix would be in the $z$-th column. All permutation with $\sigma(z) \not= y$ vanish. 
				\item A derivation corresponding to a connected component (which in this case is a subtree) for a separator  has non-zero entries for all vertices in the subtree, but iteratively, we get that a permutation already has fixed values for all entries, except the root of the subtree. 
				This means that there is again only one non-zero choice to make and it is the edge connecting the subtree with its root one level up. 
				\item Finally, $\sigma(v_0)$ is the only value left to choose and the only non-zero choice is the row corresponding to $\theta_0$. 
			\end{enumerate} 
			Combining these observations, there is only one specific permutation not vanishing and the product of the matrix entries is (up to sign) the product of all $(x_i-x_j)$ for $\{i,j\}$ in the tree, thus a defining polynomial $Q$ of $\mathcal{A}(\mathscr{T})$. 
			
		\end{example} 
		\begin{example} 
			The $\ell$-antihole graph is defined as the complement graph of the cycle graph $C_\ell$. In \cite{abe2023projective}, we proved that the set \[\theta_0, \dots ,\theta_{\ell-3}, \sigma_i, 1 \leq i \leq \ell\]
			is a generating set, $\sigma_i$ defined as in Example \ref{ex:nui}. The connectivity of the $\ell$-antihole is $\ell-3$ and the minimal separators are the neighbourhoods of the vertices, thus this statement immediatly follows from Theorem \ref{generating_corollary}. 
		\end{example} 
		
		\subsection*{The separator poset} 
		In this section we identify the generating sets of $D(\A)$ with posets fulfilling certain conditions.
		
		\begin{definition}
			Let $G$ be a graph. The \emph{separator poset} $\mathscr{P}(G)$ is the set \[\{(T,C)~\vert~ T \in \mathscr{S}(G), C \in \mathscr{C}(T)\}\] ordered by inclusion of the connected components ($(T,C) \leq (T', C') ~\text{if}~C \subseteq C'$). 
		\end{definition}
		We can add a (possibly empty) set of elements $(T,C)$ to $\mathscr{P}(G)$, where $T$ is a non-minimal separator and $C$ is a union of components of $\mathscr{C}(T)$. Call this poset an \emph{augmented separator poset} of $G$. We identify an element of the form $(T,C)$ with the derivation $\theta_C^T$. 
		
		\begin{definition} 
			Let $G$ be a graph with separator poset $\mathscr{P}(G)$ and $\mathcal{Q}$ an augmented separator poset of $G$. We add elements $(T', C')$ generated (as derivations) by elements of $\mathcal{Q}$, such that there exists an element $p \in \mathscr{P}$ with $p > (T',C')$. Call this new poset a \emph{generated $\mathcal{Q}$-poset}. 
		\end{definition} 
		
		Let $(T,C) \in \mathscr{P}(G)$ with $\vert C \vert = k$. Call a chain $(T_1, C_1) < \dots < (T_k,C_k) = (T,C)$ a \emph{complete chain} in $\mathscr{P}(G)$. 
		With this definition, we can define the notion of a generating set of $D(\A)$ in terms of posets: 
		\begin{lemma} 
			Let $G$ be a graph and let $\mathcal{Q}$ be an augmented separator poset. The elements in $\mathcal{Q}$ form a generating set of $D(\A)$ together with the set $\{\theta_i, 0 \leq i \leq \kappa(G)\}$ if there exists a generated $\mathcal{Q}$-poset, such that every element of $\mathscr{P}(G)$ is the maximal element of a complete chain in this poset. In this case, call $\mathcal{Q}$ \emph{complete}.  
		\end{lemma}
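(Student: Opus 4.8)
The plan is to read the statement as a poset-theoretic repackaging of Proposition~\ref{genprop}, Lemma~\ref{polynomial_generation} and Corollary~\ref{cor:thetai}, and to reduce to them. Write $\A = \A(G)$ and set $\Theta := \mathcal{Q} \cup \{\theta_0, \dots, \theta_{\kappa(G)}\}$, identifying each poset element $(T,C)$ with its derivation $\theta_C^T$. The inclusion $\langle \Theta \rangle \subseteq D(\A)$ is immediate, since each $\theta_i$ and each $\theta_C^T$ lies in $D(\A)$, and every element of the generated $\mathcal{Q}$-poset is by construction an $S$-linear combination of such derivations. So the whole content is the reverse inclusion, and by Proposition~\ref{genprop} it suffices to exhibit every $\theta_i$ (for $i \geq 0$) and every $\theta_C^{T,p}$ (with $T \in \mathscr{S}(G)$, $C \in \mathscr{C}(T)$, $p \in S[y]$) inside $\langle \Theta \rangle$.

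First I would fix an arbitrary $(T,C) \in \mathscr{P}(G)$ with $|C| = k$ and invoke the completeness hypothesis: $(T,C)$ is the top of a complete chain $(T_1,C_1) < \dots < (T_k,C_k) = (T,C)$ in a generated $\mathcal{Q}$-poset. Because the chain is strictly increasing for the inclusion order on components and has exactly $k = |C|$ elements, the sizes $|C_1| < \dots < |C_k| = k$ force $|C_i| = i$, so the chain singles out an ordering $v_1, \dots, v_k$ of $C$ with $C_i = \{v_1, \dots, v_i\}$; this is precisely a descending chain of $(T,C)$ in the sense of Definition~\ref{separatorchain}. Each $(T_i,C_i)$ lies in the generated $\mathcal{Q}$-poset, so $\theta_{C_i}^{T_i}$ is either an element of $\mathcal{Q}$ or generated as a derivation by elements of $\mathcal{Q}$; either way $\theta_{C_i}^{T_i} \in \langle \mathcal{Q}\rangle \subseteq \langle \Theta \rangle$. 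Lemma~\ref{polynomial_generation} then produces all $\theta_C^{T,p}$, $p \in S[y]$, from this chain, so all of them lie in $\langle \Theta \rangle$. Running this over every $(T,C) \in \mathscr{P}(G)$ places, inside $\langle \Theta \rangle$, the derivations $\theta_0, \dots, \theta_{\kappa(G)}$ together with a descending chain for every minimal separator and every component; Corollary~\ref{cor:thetai} then supplies the remaining $\theta_i$ with $i > \kappa(G)$, and Proposition~\ref{genprop} (equivalently Theorem~\ref{generating_corollary} applied to the resulting set $\Theta_G$) closes the argument.

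The hard part will be making rigorous the identification of a complete chain in the generated $\mathcal{Q}$-poset with a descending chain in the sense of Definition~\ref{separatorchain}. The poset order only records the inclusions $C_1 \subsetneq \dots \subsetneq C_k$, whereas Lemma~\ref{polynomial_generation} is phrased for the specific separators $T_i = \bigcup_{j \le i} N(v_j) \setminus C_i$, so I would need to check that each chain element $(T_i, C_i)$ actually carries this boundary separator, and is thus the intended derivation $\theta_{C_i}^{T_i}$, by arguing that an element sitting below $(T,C)$ (whether in $\mathscr{P}(G)$, in the augmentation, or generated) must attach to $C_i$ exactly the set of neighbours of $C_i$ outside $C_i$. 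The second point to verify is that ``generated by elements of $\mathcal{Q}$'' behaves transitively: a complete chain may pass through generated elements, and I must confirm that their derivations, being $S$-linear combinations of elements of $\mathcal{Q}$, keep every $\theta_{C_i}^{T_i}$ — and hence the output of Lemma~\ref{polynomial_generation} — inside $\langle \Theta \rangle$. Once these bookkeeping points are settled, the statement follows formally from the results of this section.
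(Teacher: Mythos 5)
Your proposal takes essentially the same route as the paper, whose entire proof is the single sentence that the lemma ``follows immediately from Theorem \ref{generating_corollary}, since the complete chains of the poset correspond to the descending chains of the theorem''; your reduction through Proposition \ref{genprop}, Lemma \ref{polynomial_generation} and Corollary \ref{cor:thetai} simply unpacks the machinery behind that theorem. The two bookkeeping points you flag --- that each chain element $(T_i,C_i)$ must actually carry the boundary separator $T_i = \bigcup_{j\leq i} N(v_j)\setminus C_i$ for Lemma \ref{polynomial_generation} to apply (the inclusion $T_{i-1}\subseteq T_i\cup\{v_i\}$ is what its induction uses), and that generation through intermediate poset elements is transitive --- are precisely what the paper's one-line proof leaves implicit, so your writeup is, if anything, more careful than the original.
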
 
		\begin{proof} 
			This follows immediately from Theorem \ref{generating_corollary}, since the complete chains of the poset correspond to the descending chains of the theorem. 
		\end{proof} 
		\begin{example} \label{ex:7_without_chord} 
			Consider the graph $G$ in Figure \ref{fig:7_without_chord}. For readability, the tuples $(T,C)$ in the illustrations are written as $[t\in T], \{v \in C\}$. The separator poset of $G$ does not contain a complete chain for each tuple, but there is a generated $\mathscr{P}(G)$-poset, which has a complete chain for each element of $\mathscr{P}(G)$, therefore, it is complete. 
			\begin{figure}
				\includegraphics[width=0.8\textwidth]{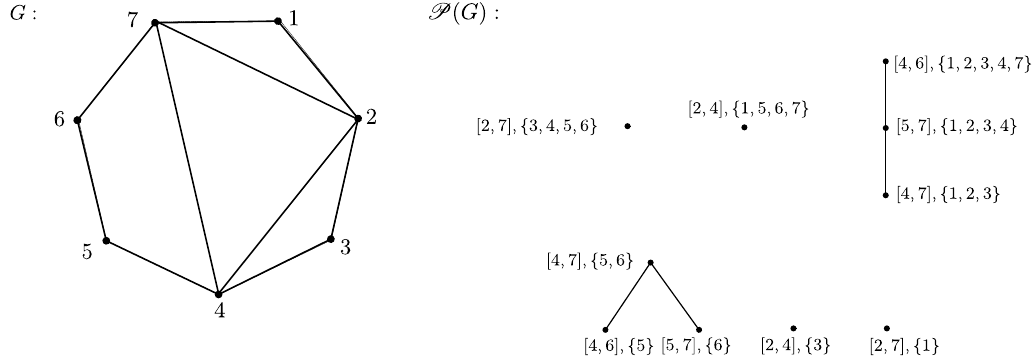} 
				\caption{The graph $G$ with its unique separator poset.}
				\label{fig:7_without_chord}
			\end{figure}
		\end{example} 
		We note here the characterization of minimality in terms of the poset structure: 
		
		\begin{definition}
			A complete poset $\mathcal{Q}$ corresponds to a minimal generating set if no proper subposet of $\mathcal{Q}$ is complete. 
		\end{definition}
		\section{On minimal generating sets} \label{minimality} 
		
		The goal of this section is to describe minimal generating sets in terms of the poset structure established in the previous section. Since the generators are homogeneous, the derivation degree sequence (see Definition \ref{def:der_deg_seq}) is unique and thus, we are able to derive some general statements about it using our construction.

		\medskip 
		\noindent 
		In order to find redundant elements in a set of generators, we look at how we can generate derivations through other separator-based derivations. These generating methods can be described in terms of the poset structure as we explain in the lemmas below: 
		
		\begin{lemma} \label{lem:poset_gen} 
			Consider a derivation $\theta^{T}_{C}$ for $T$ a separator of $G$, $C\in \mathscr{C}(T)$. If there exist tuples $(T_i, C_i), 1\leq i \leq k$, with $T_i \in \mathscr{S}(G), C_i \in \mathscr{C}(T_i)$ and $A|B$ a partition of $[k]$ such that \[\bigcup_{i \in A} C_i \setminus \bigcup_{i \in B} C_i = C~\text{and}~\bigcup_{i = 1}^k T_i \subseteq T,\] then $\theta^{T}_{C}$ is generated by the derivations corresponding to $\bigcup_{i = 1}^k \{(T_i, C_i)\} \cup \mathscr{D}(T_i, C_i)$, where $\mathscr{D}(T_i, C_i)$ is a descending chain for $(T_i, C_i)$. 
		\end{lemma}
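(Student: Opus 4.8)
The plan is to reduce every generator $\theta^{T_i}_{C_i}$ to one that already carries the full separator $T$ in its coefficients, and then to assemble $\theta^T_C$ from these by an additive bookkeeping argument on the index sets $A$ and $B$. For a vertex set $S$ write $\theta^T_S := \sum_{j\in S}\prod_{t\in T}(x_j-x_t)\cdot D_j$, so that the object we want to produce is $\theta^T_C$ and the map $S\mapsto\theta^T_S$ is additive over disjoint unions and satisfies $\theta^T_S-\theta^T_{S'}=\theta^T_{S\setminus S'}$ whenever $S'\subseteq S$.

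First I would show that each $\theta^T_{C_i}$ lies in the span of $\{(T_i,C_i)\}\cup\mathscr{D}(T_i,C_i)$. Because $T_i\subseteq T$, the product over $T$ factors as $\prod_{t\in T}(x_j-x_t)=\prod_{t\in T_i}(x_j-x_t)\cdot q_i(x_j)$ with $q_i(y):=\prod_{t\in T\setminus T_i}(y-x_t)\in S[y]$. Hence $\theta^T_{C_i}=\theta^{T_i,q_i}_{C_i}$, and by Lemma \ref{polynomial_generation} the descending chain $\mathscr{D}(T_i,C_i)$ generates $\theta^{T_i,p}_{C_i}$ for every $p\in S[y]$, in particular for $p=q_i$. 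Thus each $\theta^T_{C_i}$ is generated by the listed derivations.

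It then remains to realize $\theta^T_C$ as an $S$-linear combination of the $\theta^T_{C_i}$. Using the additivity of $S\mapsto\theta^T_S$, this amounts to matching the coefficient of each vertex $j$: the combination $\sum_{i\in A}\theta^T_{C_i}-\sum_{i\in B}\theta^T_{C_i}$ assigns to $D_j$ the factor $\big(\#\{i\in A:j\in C_i\}-\#\{i\in B:j\in C_i\}\big)\cdot\prod_{t\in T}(x_j-x_t)$, which must equal $1$ on $C$ and $0$ off $C$. The set identity $\bigcup_{i\in A}C_i\setminus\bigcup_{i\in B}C_i=C$ guarantees the support is correct, and combined with the nesting and disjointness of components coming from the separator poset it forces the signed multiplicities to collapse to the indicator of $C$, so that the combination reads off $\theta^T_C$.

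The main obstacle is precisely this last multiplicity control: a naive inclusion--exclusion would introduce terms supported on intersections $C_i\cap C_{i'}$, which need not be components of any separator and so are not available as generators. I would therefore carry out the assembly by induction on $k$, peeling off one tuple at a time and using that $\bigcup_{i\in A}C_i\setminus\bigcup_{i\in B}C_i$ equals $C$ to keep every intermediate support a union of components, so that each step stays within the span of the $\theta^T_{C_i}$ rather than producing spurious intersection terms.
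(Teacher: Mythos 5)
Your outline coincides with the paper's proof up to its decisive step: the paper uses exactly your factorization $\theta^T_{C_i}=\theta^{T_i,q_i}_{C_i}$ with $q_i(y)=\prod_{t\in T\setminus T_i}(y-x_t)$, invokes the descending chains (Lemma \ref{polynomial_generation}) to produce these, and writes the same signed combination $\theta^T_C=\sum_{i\in A}\theta^{T_i,q_i}_{C_i}-\sum_{i\in B}\theta^{T_i,q_i}_{C_i}$. But the point at which you stop --- asserting that ``nesting and disjointness of components coming from the separator poset'' forces the signed multiplicities to collapse to the indicator of $C$ --- is the actual content of the lemma, and your proposed substitute, an induction on $k$ that peels off one tuple at a time, is never executed and would not work as described: removing a tuple does not explain why a vertex cannot lie in two components on the same side of the partition; it merely defers that question to the smaller instance, where the same overcounting can occur.

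The paper closes this gap with a short structural argument, not an induction. If $i_1\neq i_2$ lie on the same side and $C_{i_1}\cap C_{i_2}\setminus T\neq\emptyset$, then a component of $G\setminus T_{i_1}$ meets but is not contained in a component of $G\setminus T_{i_2}$ (or vice versa), so one of $T_{i_1}\cap C_{i_2}$, $T_{i_2}\cap C_{i_1}$ is non-empty; since $T_{i_j}\subseteq T$, any such overlap is confined to $T$. The observation you are missing is that this suffices: for $j\in T$ the coefficient $\prod_{t\in T}(x_j-x_t)$ contains the factor $x_j-x_j$ and vanishes identically, so same-side overlaps are supported exactly where all the derivations are zero. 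Hence every $j\notin T$ lies in at most one $C_i$ with $i\in A$ and at most one with $i\in B$, the signed multiplicity is the indicator of $D\setminus E=C$ (writing $D=\bigcup_{i\in A}C_i$, $E=\bigcup_{i\in B}C_i$), and the ``spurious intersection terms'' you worried about --- supported on sets $C_i\cap C_{i'}$ that are not components of any separator --- never need generators at all, because they sit inside $T$. (One caveat your sketch shares with the paper: a vertex $j\in E\setminus(D\cup T)$ would receive coefficient $-1$, which the set identity alone does not exclude; both arguments tacitly assume the $B$-side supports are absorbed by $D\cup T$.) Replace your induction with this disjointness-modulo-$T$ argument and your proof is complete, and identical to the paper's.
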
 
		\begin{proof} 
			Let $D = \bigcup_{i \in A} C_i$ and $E = \bigcup_{i \in B} C_i$, i.e. $C = D \setminus E$. 
			
			First, assume there exist two minimal separators $T_{i_1}, T_{i_2}$ with $i_1,i_2$ either both in $A$ or  both in $B$, such that $C_{i_1} \cap C_{i_2} \setminus T$ is non-empty. This in turn would mean that either $T_{i_1}\cap C_{i_2}$ or $T_{i_2}\cap C_{i_1}$ is not empty, but then the vertices of such an intersection would be in $T$. 
			
			The $C_{i_j}\setminus T$ in $D$ and $E$ respectively are disjoint and we can write 
			\[\theta_{C}^T = \sum_{i \in A} \theta_{C_i}^{T_i, \prod_{t \in T\backslash T_i} (y-x_t)} - \sum_{i \in B} \theta_{C_i}^{T_i, \prod_{t \in T\backslash T_i} (y-x_t)},\] where the derivations of the sum are generated by the minimal separators and their descending chains: Since the components are disjoint, each entry $i$ in the derivation $\theta_C^T$ appears only once  in $D$ or $E$ respectively, and thus is only non-zero in one of the derivations. Multiplying this non-zero expression with the polynomial  $\prod_{t \in T\backslash T_i} (y-x_t)$ yields the desired entry. 
		\end{proof} 
		\begin{example} 
			We can apply Lemma \ref{lem:poset_gen} to the graph of Example \ref{ex:7_without_chord}: Consider the derivation $\theta_{\{1,5,6\}}^{\{2,4,7\}}$, it is generated by the derivations $ \theta_{\{5,6\}}^{\{4,7\}}, \theta_{\{6\}}^{\{5,7\}},\theta_{\{1\}}^{\{2,7\}}$ by \[\theta_{\{1,5,6\}}^{\{2,4,7\}} = (x_1-x_4) \cdot \theta_{\{1\}}^{\{2,7\}} + \theta_{\{5,6\}}^{\{4,7\}, (y-x_2)}\] and \[\theta_{\{5,6\}}^{\{4,7\}, (y-x_2)} = (x_5-x_2)\cdot \theta_{\{5,6\}}^{\{4,7\}}+(x_6-x_4)\cdot \theta_{\{6\}}^{\{5,7\}}.\] 
			
			See Figure \ref{fig:7_without_chord_gen} for a complete generated $\mathscr{P}(G)$-poset.  
			
			\begin{figure}
				\includegraphics[width=0.8\textwidth]{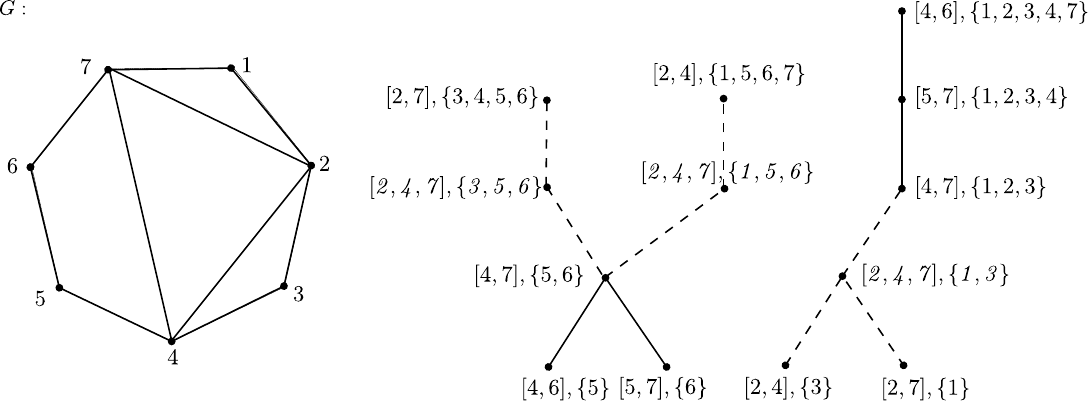} 
				\caption{The graph $G$ with a generated $\mathscr{P}(G)$-poset. The elements that are generated by $\mathscr{P}(G)$ are in italic. Note that  $\mathscr{P}(G)$ is complete.} 
				\label{fig:7_without_chord_gen}
			\end{figure}
		\end{example}

		The statement of this lemma can be summarized as follows: If we have an augmented poset $\mathcal{Q}$ of a graph $G$ and an element $p$ of rank $m$, there might exist a set of elements in $\mathcal{Q}$ whose ranks are a partition of $m$, where the corresponding derivations generate the derivation corresponding to $p$. Thus, the latter derivation would be redundant in a generating set containing them all. 
		
		We next look at a special case of of generation through the lemma. 
		\begin{example} \label{lem:gen_compl}
				Let $(T,C)$ be a tuple with $T$ a separator and $C\in \mathscr{C}(T)$. Define the complement of $C$ with respect to $T$ as $G\backslash (T\sqcup C)$ and denote it by $\overline{C_T}$.  
				
			The derivations $\theta_C^T, \theta_i, i \in \mathbb{N}$ generate $\theta_{\overline{C_T}}^T$ as we can write \[\theta_{\overline{C_T}}^T = \sum_{j\in V(G)} \prod_{t \in T} (x_j-x_t) D_j - \theta_C^T\] where the first derivation is generated by the $\theta_i$. 
			Thus, following Corollary \ref{cor:thetai}, if we have $\theta_0,\dots, \theta_{\kappa(G)}$ in a generating set and a minimal separator of minimal cardinality $T_0$, for which all descending chains $(T_0, C_0), C_0 \in \mathscr{C}(T)$ exist, then for each $(T,C)$ we can automatically generate the complement $(T, \overline{C_T})$. 
		\end{example} 
		
		The next lemma makes clear how minimality of a separator factors into minimality considerations of a generating set.

		\begin{lemma} \label{lem:non_min_gen} 
			Let $T \in \mathscr{S}(G), C\in \mathscr{C}(T)$, then $T$ is never generated through means of other minimal separators as described in Lemma \ref{lem:poset_gen}. 
		\end{lemma}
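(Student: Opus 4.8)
The plan is to argue by contradiction, converting the algebraic assertion into a purely combinatorial statement about how the components $C_i$ sit inside $C$. Suppose $\theta_C^T$ could be generated as in Lemma~\ref{lem:poset_gen} using only \emph{other} minimal separators, i.e.\ there are tuples $(T_i,C_i)$, $1\le i\le k$, with $T_i\in\mathscr{S}(G)$, $T_i\neq T$, $C_i\in\mathscr{C}(T_i)$, and a partition $A\mid B$ of $[k]$ satisfying $\bigcup_{i\in A}C_i\setminus\bigcup_{i\in B}C_i=C$ and $\bigcup_i T_i\subseteq T$. From $T_i\subseteq T$ and $T_i\neq T$ I get $T_i\subsetneq T$ for every $i$. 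I would run the argument for a component $C$ with $N(C)=T$; such a component exists because $T$ is a \emph{minimal} separator (cf.\ \cite{Diestel}), and it is exactly these generators that force $T$ to appear, so establishing indispensability here is what the statement is really about.

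First I would record the key connectivity observation: since $T_i\subseteq T$ and $C\cap T=\emptyset$, the subgraph induced on $C$ is the same in $G\setminus T_i$ as in $G\setminus T$, so $C$ remains connected in $G\setminus T_i$. Consequently, whenever $C\cap C_i\neq\emptyset$ the whole of $C$ lies in the single component $C_i$, i.e.\ $C\subseteq C_i$. Feeding this into the defining equation $\bigcup_{i\in A}C_i\setminus\bigcup_{i\in B}C_i=C$: any $v\in C$ lies in some $C_{i_0}$ with $i_0\in A$ and in no $C_i$ with $i\in B$, which upgrades to $C\subseteq C_{i_0}$ for some $i_0\in A$ and $C\cap C_i=\emptyset$ for all $i\in B$.

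The contradiction then comes from a single witness vertex. Because $T_{i_0}\subsetneq T$, I can pick $t\in T\setminus T_{i_0}$; as $N(C)=T$, this $t$ is adjacent to $C$, and since $t\notin T_{i_0}$ while $C\subseteq C_{i_0}$ is connected, $t$ lies in the same component of $G\setminus T_{i_0}$ as $C$, so $t\in C_{i_0}$. But $t\in T$ gives $t\notin C$, so to satisfy $\bigcup_{i\in A}C_i\setminus\bigcup_{i\in B}C_i=C$ the vertex $t$ must be cancelled, i.e.\ $t\in C_{i_1}$ for some $i_1\in B$. Now $t$ is adjacent to $C$ and $C\cap T_{i_1}=\emptyset$, so a neighbour of $t$ inside $C$ lies in the same component as $t$, forcing $C\subseteq C_{i_1}$ and contradicting $C\cap C_{i_1}=\emptyset$. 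Hence no such generation exists and at least one $T_i$ must equal $T$.

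I expect the main obstacle to be isolating where minimality of $T$ is genuinely used. The entire argument hinges on the witness $t\in T\setminus T_{i_0}$ being adjacent to $C$, which is precisely the full-component property $N(C)=T$; for a component with $N(C)\subsetneq T$ the derivation $\theta_C^T$ is in fact an $S$-multiple of $\theta_C^{N(C)}$ and thus reducible, so the statement has content exactly for the components with $N(C)=T$, whose existence is guaranteed by the minimality of $T$. Making this case distinction explicit, and checking that the cancellation bookkeeping for the signed union really does localise to a single vertex $t$, is the delicate part; everything else is the connectivity observation applied twice.
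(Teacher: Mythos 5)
Your core argument is correct, and it is a variant of the paper's proof rather than a copy of it: both run by contradiction from $T_i\subsetneq T$, but where the paper invokes a pair $a\in C$, $b\notin C$ with $T$ a minimal $(a,b)$-separator and tracks the vertex $b$ on the far side (each $T_i$, being a proper subset of $T$, fails to separate $a$ from $b$, forcing $b$ into the signed union and yielding the contradiction), you instead encode minimality of $T$ through the full-component property $N(C)=T$ and track a witness $t\in T\setminus T_{i_0}$ that is absorbed into $C_{i_0}$, must be cancelled by some $C_{i_1}$ with $i_1\in B$, and then drags all of $C$ into $C_{i_1}$. These are equivalent characterizations of minimal separators, and your bookkeeping (the observation that $C\cap C_i\neq\emptyset$ forces $C\subseteq C_i$, hence $C\subseteq C_{i_0}$ and $C\cap C_i=\emptyset$ for $i\in B$) is actually more explicit than the paper's, whose final step is compressed to ``a contradiction to the assumption.'' Moreover, your explicit restriction to components with $N(C)=T$ identifies a genuine subtlety that the paper's proof glosses over: the pair $a\in C$, $b\notin C$ minimally separated by $T$ exists \emph{only} when $C$ is a full component, and for $N(C)\subsetneq T$ the statement fails as literally written, since the single tuple $(N(C),C)$ satisfies the hypotheses of Lemma~\ref{lem:poset_gen} (one checks $N(C)$ is itself a minimal separator with $C\in\mathscr{C}(N(C))$ and $N(C)\subsetneq T$), so your case distinction is a real improvement. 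One small correction: for $N(C)\subsetneq T$ the derivation $\theta_C^T$ is not an $S$-multiple of $\theta_C^{N(C)}$ --- the cofactor $\prod_{t\in T\setminus N(C)}(x_i-x_t)$ varies with $i\in C$ --- rather it equals $\theta_C^{N(C),p}$ with $p(y)=\prod_{t\in T\setminus N(C)}(y-x_t)$, and is therefore generated by a descending chain for $(N(C),C)$ via Lemma~\ref{polynomial_generation}; this does not affect your conclusion that the lemma has content exactly for full components, but the reduction should be stated that way.
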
 
		\begin{proof} 
			Assume that there exist tuples $(T_i, C_i), 1\leq i \leq k$ with $T_i \in \mathscr{S}(G), C_i \in \mathscr{C}(T_i)$ which meet the conditions of Lemma \ref{lem:poset_gen} and $T_i \not= T$ for all $i$. It follows that $T_i \subsetneq T$ for all $i$. Since $T$ is minimal, there must be $a \in C, b \notin C$, such that $T$ separates $a,b$ and no proper subset of $T$ separates $a,b$. This in turn means that since $a \in C_i$, $b \in C_i$ or $b \in T_i$ for all $i$ and thus $b \in \bigcup_{i = 1}^k C_i$ or $b \in \bigcup_{i = 1}^k T_i$, a contradiction to the assumption. 
		\end{proof} 
		
		This lemma tells us that if we want to start with a set of derivations to obtain a generating set, the derivations corresponding to the elements of $\mathscr{P}(G)$ are a good starting point, because these elements will not be generated by derivations associated to other separators. 
		
		\begin{lemma} 
			In a generating set $\Theta$ corresponding to a complete, augmented $\mathscr{P}(G)$-poset, a generator may only be generated through other elements of $\Theta$ by means of Lemma \ref{lem:poset_gen}. 
		\end{lemma}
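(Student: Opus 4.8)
The plan is to start from an arbitrary $S$-linear relation expressing one generator through the others and to read off, coefficient by coefficient, that it must have exactly the shape prescribed by Lemma~\ref{lem:poset_gen}. So I would suppose $\theta^T_C \in \Theta$ satisfies $\theta^T_C = \sum_\alpha f_\alpha\,\theta^{T_\alpha}_{C_\alpha}$ with $f_\alpha \in S$ and the $\theta^{T_\alpha}_{C_\alpha}$ ranging over $\Theta\setminus\{\theta^T_C\}$; using the empty-vertex convention I would regard each $\theta_i$ as the separator-based derivation attached to the empty separator, so that every summand is genuinely of the form $\theta^{T_\alpha}_{C_\alpha}$ (with $T_\alpha = \emptyset$ permitted). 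Since all generators are homogeneous, I first restrict the relation to polynomial degree $|T|$, which forces $\deg f_\alpha = |T|-|T_\alpha|$ and in particular $|T_\alpha|\le |T|$ for every contributing index. Comparing the coefficient of $D_j$ on both sides then yields, for each vertex $j$, the identity
\[
\sum_{\alpha:\, j\in C_\alpha} f_\alpha \prod_{t\in T_\alpha}(x_j-x_t) = \begin{cases}\prod_{t\in T}(x_j-x_t), & j\in C,\\[2pt] 0, & j\notin C,\end{cases}
\]
which is the system I want to exploit.

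Next I would extract the partition $A\mid B$ from the support bookkeeping. The support of each $\theta^{T_\alpha}_{C_\alpha}$ is the component $C_\alpha$ and the support of $\theta^T_C$ is $C$; the vanishing half of the displayed system says that the contributions over every vertex outside $C$ must cancel, while the non-vanishing half says they reconstruct $\prod_{t\in T}(x_j-x_t)$ on $C$. Grouping the contributing tuples according to whether their component enters with a positive or negative net sign in the telescoping produced as in the proof of Lemma~\ref{lem:poset_gen} splits the index set into $A$ and $B$, and the cancellation--reconstruction identities then translate precisely into the set-theoretic condition $\bigcup_{i\in A} C_i \setminus \bigcup_{i\in B} C_i = C$. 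This step is essentially inclusion--exclusion on the components and should go through once the contributing tuples are fixed.

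The main obstacle is the separator condition $\bigcup_\alpha T_\alpha \subseteq T$, i.e. that no contributing generator may use a separator vertex lying outside $T$. Here I would argue by contradiction: if some contributing $T_\alpha$ contains an \emph{stray} vertex $s\notin T$, then the factor $(x_j-x_s)$ appears in that summand, and I would specialise the variables along the separator $T$ (setting $x_t$ equal for all $t\in T$, which is legitimate since the derivations depend only on the differences $x_j-x_t$) to force these stray factors either to survive---contradicting that the left-hand side $\prod_{t\in T}(x_j-x_t)$ has its only roots along $T$---or to collapse the offending generator; simultaneously I would invoke Lemma~\ref{lem:non_min_gen} to rule out that a minimal separator incomparable to $T$ participates, since such a separator can never be produced from the others. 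Once $\bigcup_\alpha T_\alpha\subseteq T$ and the component partition are established, the contributing tuples together with their descending chains are exactly the data of Lemma~\ref{lem:poset_gen}, so the relation is an instance of that lemma, which is what the statement asserts. I expect the delicate point to be making the specialisation argument clean enough to handle simultaneous cancellations among several stray factors, and this is precisely where the rigidity coming from $\Theta$ being indexed by the complete augmented $\mathscr{P}(G)$-poset---rather than by an arbitrary generating set---is really used.
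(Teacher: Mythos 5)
Your opening moves (restricting to polynomial degree $|T|$, comparing the coefficient of each $D_j$) are fine and parallel the paper's setup, but the argument has a genuine gap at exactly the two points you flag as delicate, and the tools you propose there do not close it. First, the appeal to Lemma \ref{lem:non_min_gen} is circular: that lemma only rules out generation of $\theta_C^T$ \emph{in the form of Lemma \ref{lem:poset_gen}}, whereas the present lemma is precisely the assertion that an arbitrary $S$-linear relation among elements of $\Theta$ must take that form; it therefore gives no information about the arbitrary relation you start from. Second, the specialisation $x_t\mapsto u$ for all $t\in T$ does not force $\bigcup_\alpha T_\alpha\subseteq T$: after specialising, a stray factor $(x_j-x_s)$ with $s\notin T$ in one summand can cancel against another summand carrying the same stray vertex, and you offer no mechanism excluding such simultaneous cancellations — as you yourself concede. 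Since the whole remainder of your argument is conditioned on this containment, the proof is incomplete as it stands.

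The paper proceeds differently at both places, and its key structural observation is also what your ``inclusion--exclusion on the components'' step is missing. The containment $T_i\subseteq T$ is read off from homogeneity together with the product form of the generators in $\Theta$, not by specialisation. Then each contributor is raised to the degree of $\theta_C^T$ by setting $\theta_{C_i}' = \theta_{C_i}^{T_i,\,\prod_{t\in T\setminus T_i}(y-x_t)}$, so that $\theta_{C_i}'\in D(\A(G))$ and its entry at any $j\in C_i$ is $\prod_{t\in T}(x_j-x_t)$. Membership in $D(\A(G))$ now forces the support of $\theta_{C_i}'$ to meet each component $D\in\mathscr{C}(T)$ either fully or not at all: if $j\in D$ carried a nonzero entry while an adjacent $j'\in D$ carried zero, then $\theta_{C_i}'(x_j-x_{j'})=\prod_{t\in T}(x_j-x_t)$ would have to be divisible by $x_j-x_{j'}$ with $j'\notin T$, which is impossible. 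Hence contributing derivations either agree on a whole component of $\mathscr{C}(T)$ or have disjoint supports, and it is this dichotomy — not a grouping of telescoping terms by ``net sign,'' which is not even well defined when the coefficients $f_\alpha$ are general polynomials — that produces the partition $A\mid B$ and the identity $\bigcup_{i\in A}C_i\setminus\bigcup_{i\in B}C_i=C$ required by Lemma \ref{lem:poset_gen}. You would need to replace your specialisation-plus-Lemma-\ref{lem:non_min_gen} step and your sign-grouping step by arguments of this kind for the proof to go through.
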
 
		\begin{proof} 
			Assume that there is a derivation $\theta_C^T$ generated by some $\theta_{C_1}^{T_1}, \dots ,\theta_{C_k}^{T_k}$ and the $\theta_i$. By construction of the separator-based derivations and their homogeneity, it follows that $T_i \subseteq T$ for all $T_i$. For all $\theta_{C_i}^{T_i}$, define $\theta_{C_i}' = \theta_{C_i}^{T_i, \prod_{t \in T\setminus T_i} (y-x_t)}$. Because the derivations are in $D(\A)$, if $\theta_{C_i}'$ has a nonzero entry for a vertex of some component $D \in \mathscr{C}(T)$, it has a non-zero entry for all vertices of $D$. Therefore, with the same argument as in the proof of Lemma \ref{lem:poset_gen}, we get that the derivations $\theta_{C_i}'$ that share entries in one component $D \in \mathscr{C}(T)$, they either agree on all entries in $D$ or their sets of indexes of non-zero entries are disjoint. Thus, they meet the condition of Lemma \ref{lem:poset_gen}. 
		\end{proof} 

		A good heuristic to find a minimal generating set, would be as follows: Let $G$ be a graph. 
		\begin{enumerate} 
			\item Construct $\mathscr{P}(G)$ and define $\mathcal{Q}(G) := \mathscr{P}(G)$. 
			\item As long as there is no generated $\mathcal{Q}(G)$-poset such that a minimal separator $T_{\min}$ of minimal cardinality has descending chains for all elements $(T_{\min}, C), C \in \mathscr{C}(T_{\min})$, augment the poset by one element, adding to one of the incomplete chains. 
			\item For each minimal separator $T \not= T_{\min}$, remove one element $(T, C) C \in \mathscr{C}(T)$ that is not in a descending chain of $T_{\min}$ (see Lemma \ref{lem:no_chains}). 
		\end{enumerate} 
		\begin{lemma} \label{lem:no_chains} 
			For $T, T_{\min}$ as described above, there always exists an element $(T,C)$ which is not part of a complete chain of any $(T_{\min}, C_{\min})$ in the complete poset. 
		\end{lemma}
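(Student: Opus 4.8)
The plan is to first reduce the statement to a purely combinatorial claim about how the components of $T$ sit inside the components of $T_{\min}$, and then to prove that claim by contradiction using the full-component characterization of minimal separators.

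First I would analyse what it means for $(T,C)$ to lie on a complete chain of some $(T_{\min}, C_{\min})$. A complete chain $(T_1,C_1) < \dots < (T_k,C_k) = (T_{\min}, C_{\min})$ has $k = |C_{\min}|$ strictly increasing elements, so the components satisfy $C_1 \subsetneq \dots \subsetneq C_k = C_{\min}$ with $|C_i| = i$. Hence if $(T,C) = (T_i, C_i)$ with $i < k$ then $C \subsetneq C_{\min}$, while $i = k$ forces $(T,C) = (T_{\min}, C_{\min})$ and in particular $T = T_{\min}$, which is excluded. Therefore it suffices to produce a single component $C \in \mathscr{C}(T)$ that is not strictly contained in any component of $T_{\min}$: for such a $C$ the pair $(T,C)$ can match none of the $(T_i,C_i)$, and so is absent from every complete chain of every $(T_{\min}, C_{\min})$.

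Next I would recall the standard fact from the separator theory underlying Section \ref{graph_theory_preliminaries} that a minimal separator $T$ has at least two \emph{full} components, i.e. components $C \in \mathscr{C}(T)$ such that every vertex of $T$ has a neighbour in $C$; concretely, the components containing the two vertices that $T$ separates are full. I will use the two full components of $T$ together with the fact that $T_{\min}$, being a separator, has at least two components. The core of the argument is then a proof by contradiction. Assume every component of $T$ is strictly contained in some component of $T_{\min}$. First this forces $T_{\min} \subseteq T$: a vertex $u \in T_{\min}$ lying in a component $C$ of $T$ would satisfy $u \in C \subsetneq C_{\min}$, contradicting $C_{\min}\cap T_{\min} = \emptyset$. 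Since $T \neq T_{\min}$, we get $T_{\min} \subsetneq T$, so $T \setminus T_{\min} \neq \emptyset$. Now take the two full components $C_1, C_2$ of $T$; by hypothesis each lies in a component of $T_{\min}$. Any $v \in T \setminus T_{\min}$ has a neighbour in both $C_1$ and $C_2$ by fullness, and since $v$ and these neighbours all avoid $T_{\min}$, the corresponding edges of $G\setminus T_{\min}$ force $v$, $C_1$ and $C_2$ into one and the same component $D$ of $T_{\min}$; running this over all such $v$ shows $T\setminus T_{\min} \subseteq D$. Because $T_{\min}$ has a second component $D' \neq D$, we obtain $T \cap D' = \emptyset$, so deleting $T$ leaves $G[D']$ untouched; as any path leaving $D'$ in $G\setminus T$ would have to cross $T_{\min} \subseteq T$, the component of $G\setminus T$ containing $D'$ is exactly $D'$. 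Thus $D' \in \mathscr{C}(T)$ coincides with the component $D'$ of $T_{\min}$, contradicting that it is \emph{strictly} contained. This contradiction produces the required component and finishes the proof.

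The step I expect to be the main obstacle is the last one, locating the component that is not strictly contained. The naive guess that one of the two full components of $T$ already works is false: a small example consisting of a $4$-cycle with two pendant vertices has both full components of $T$ strictly inside a single component of $T_{\min}$. The subtlety is therefore to see that all of $T \setminus T_{\min}$, and with it both full components of $T$, is trapped in one component $D$ of $T_{\min}$, which then liberates a different component $D'$ that reappears verbatim as a component of $T$. Getting the edge- and connectivity-bookkeeping between $\mathscr{C}(T)$ and $\mathscr{C}(T_{\min})$ exactly right is where the care is needed; notably, the argument never uses that $T_{\min}$ has minimal cardinality, only that both $T$ and $T_{\min}$ are minimal separators.
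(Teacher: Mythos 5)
Your proof is correct and follows the same skeleton as the paper's, which splits into the cases $T_{\min} \not\subseteq T$ (take the component of $G\setminus T$ containing a vertex of $T_{\min}$, which can lie in no component of $G\setminus T_{\min}$) and $T_{\min} \subsetneq T$ (take a common component in $\mathscr{C}(T_{\min})\cap\mathscr{C}(T)$); these are precisely the two branches of your contrapositive, with your derivation of $T_{\min}\subseteq T$ matching the paper's first case and your terminal contradiction producing exactly the paper's witness $C \in \mathscr{C}(T_{\min})\cap\mathscr{C}(T)$. The genuine difference is one of completeness: the paper justifies the second case only with the assertion that this intersection is nonempty ``because $T_{\min}, T$ are both minimal,'' whereas you actually prove it via the full-component characterization of the minimal separator $T$ — every $v \in T\setminus T_{\min}$ has neighbours in both full components of $T$, so all of $T\setminus T_{\min}$ is trapped in a single component $D$ of $G\setminus T_{\min}$, and any second component $D'$ of $G\setminus T_{\min}$ (which exists since $T_{\min}$ is a separator) is disjoint from $T$ and reappears verbatim in $\mathscr{C}(T)$. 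Your cautionary example is also accurate (for the $4$-cycle $1234$ with pendants $5$ at $1$ and $6$ at $3$, taking $T=\{1,3\}$ and $T_{\min}=\{1\}$, both full components $\{2\},\{4\}$ of $T$ sit strictly inside $\{2,3,4,6\}$, and the common component is $\{5\}$), and your closing remark that only minimality of $T$ and $T_{\min}$ as separators — not the cardinality-minimality of $T_{\min}$ — enters the argument agrees with what the paper's terse proof implicitly uses. In short: same route as the paper, but with the paper's one unproved step supplied in full.
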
 
		\begin{proof} 
			If $T_{\min} \not\subset T$, then there exists an element in $\mathscr{C}(T)$ that contains vertices of $T_{\min}$ and can thus not be part of any complete chain of $T_{\min}$. If $T_{\min} \subset T$, then $\mathscr{C}(T_{\min})\cap \mathscr{C}(T) \not= \emptyset$, because $T_{\min}, T$ are both minimal. We can chose $C$ as one of the elements of this intersection. 
		\end{proof} 
		\subsection*{Examples of posets} 
		We consider here the example of $k$-antiholes and an example of a cycle graph with two chords that intersect, as opposed to the chords in Example \ref{ex:7_without_chord}.
		\begin{example} 
			The anti-hole graphs $\overline{C_k}, k \geq 5$ are a graph class which was of special interest in \cite{abe2023projective}. In this case, the elements of $\mathscr{P}(G)$ already form descending chains for each element, so no generators need to be added to it at all. 
			See Figure \ref{fig:antihole_6} for an illustration. 
			\begin{figure}[H]
				\includegraphics[width=0.6\textwidth]{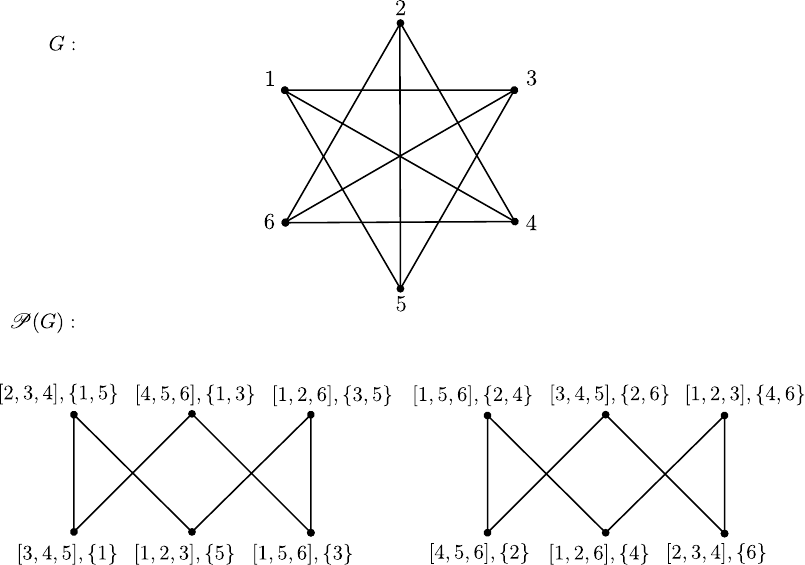} 
				\caption{The 6-antihole and its separator poset. For all antiholes it holds that the minimal separators of $\overline{C_\ell}$ are always the neighbourhoods of the vertices, thus of cardinality $\ell-3$, separating them from their neighbours in the cycle graph. The separator-posets $\mathscr{P}({\overline{C_\ell}})$ are thus complete for all $\ell$.} 
				\label{fig:antihole_6}
			\end{figure}
		\end{example} 
		\begin{example} 
			\begin{figure}[H]
				\includegraphics[width=0.6\textwidth]{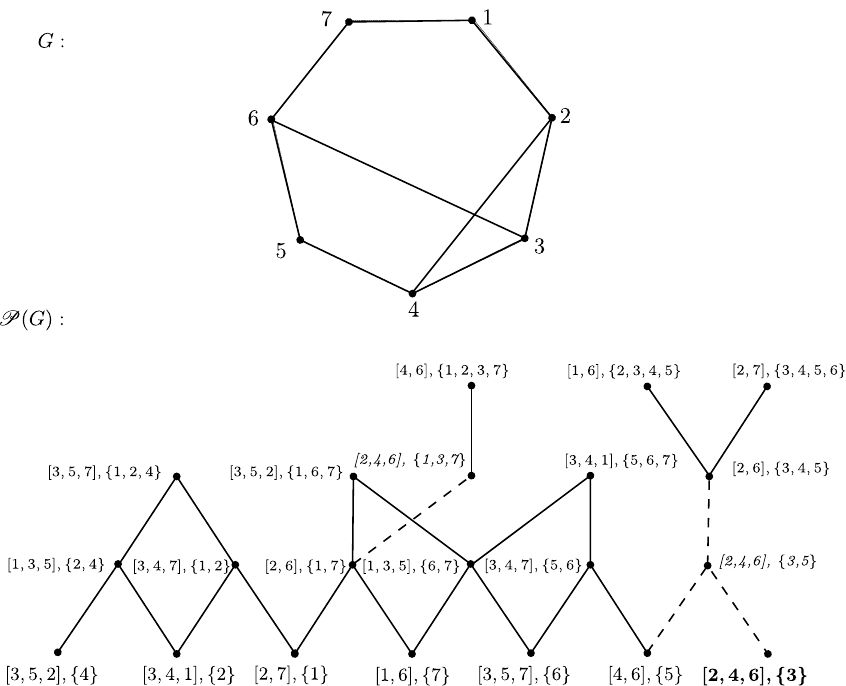} 
				\caption{The generated elements of the separator poset are italic, the added generator is bold.} 
				\label{fig:7_chord}
			\end{figure}
			Consider the graph in Figure \ref{fig:7_chord}. In this situation, there are four tuples with no descending chains, two being minimal with this property. Adding the tuple $([2,4,6], \{3\})$ to the separator poset yields a minimal generating set. Note that adding $([2,4,6], \{1,7,3\})$ or $([2,4,6], \{3,5\})$ would also work. 
		\end{example} 
		
		\subsection*{Derivation degree sequence and highest degree generator} 
		The question of determining the highest degree of a given minimal generator set was posed by Wakefield in \cite{wakefield}. 
		\begin{definition} 
			Let $G$ be a graph and let $\Theta_G$ be a minimal generating set for $D(\A)$ and write $n := \vert\Theta_G\vert$. Let $(d_1,\dots,d_n)_{\leq}$ be the ordered degree sequence of the elements in $\Theta_G$. Denote by $d := d_n$ the highest degree in the sequence. 
		\end{definition}
		We can make some general statements about this sequence using our construction: 
		
		\begin{proposition} \label{prop:subsequ} 
			Let $G$ be a graph and let $(d_1,\dots, d_n)_\leq$ be the ordered degree sequence of a minimal  generating set for $D(\A(G))$. 
			
			Then the sequence $ [i]_{0\leq i\leq \kappa(G)} + [m^{\sum_{T \in \mathscr{S}_m} (\vert\mathscr{C}(T)\vert-1)}]_{m \in \mathbb{N}}$ is a subsequence of $(d_1,\dots, d_n)_\leq$. 
		\end{proposition}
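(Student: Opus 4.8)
The plan is to exploit that $D(\A(G))$ is a finitely generated graded $S$-module whose minimal generating sets may be taken homogeneous, so the number of generators of each polynomial degree $m$ is an invariant, equal to $\dim_{\mathbb{K}}\bigl(D(\A(G))/\mathfrak{m}D(\A(G))\bigr)_m$ with $\mathfrak{m}=\langle x_1,\dots,x_\ell\rangle$. Thus $(d_1,\dots,d_n)_{\le}$ is well defined and the subsequence claim is equivalent to the family of lower bounds, for every $m\ge 0$,
\[
\#\{\,j : d_j = m\,\}\;\ge\;[\,0\le m\le\kappa(G)\,]\;+\;\sum_{T\in\mathscr{S}_m}\bigl(|\mathscr{C}(T)|-1\bigr),
\]
with $[\,\cdot\,]$ the Iverson bracket. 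I would establish these bounds degree by degree by exhibiting that many linearly independent classes in $\bigl(D/\mathfrak{m}D\bigr)_m$, taken from two families: the \emph{spine} $\theta_0,\dots,\theta_{\kappa(G)}$ and the separator-based derivations $\theta_C^{T}$ with $|T|=m$.

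For the spine and the indicator summand I would use Proposition \ref{connectedprop}, which gives $D(\A(G))_j=D(\A(K_\ell))_j$ for all $j<\kappa(G)$, hence $(\mathfrak{m}D(\A(G)))_m=(\mathfrak{m}D(\A(K_\ell)))_m$ for $m\le\kappa(G)$ (this subspace depends only on $D_{m-1}$ and $D_m$ in the relevant range). Since the braid module is free with one generator in each degree $0,\dots,\ell-1$, this forces $\dim(D/\mathfrak{m}D)_m=1$ exactly for $m<\kappa(G)$, and for $m=\kappa(G)$ the short exact sequence
\[
0\to D(\A(K_\ell))_\kappa/(\mathfrak{m}D)_\kappa\to D(\A(G))_\kappa/(\mathfrak{m}D)_\kappa\to D(\A(G))_\kappa/D(\A(K_\ell))_\kappa\to 0
\]
isolates a one-dimensional left-hand contribution (the class of $\theta_\kappa$), accounting for the extra $+1$ at $m=\kappa(G)$. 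Crucially, every $\theta_C^{T}$ violates the braid condition on a non-edge of $G$ incident to $C$, so it lies outside $D(\A(K_\ell))$; hence the separator classes inject into the right-hand quotient and are automatically independent from $\theta_\kappa$.

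The heart of the proof is the separator summand. For a fixed minimal separator $T$ with $|T|=m$, Example \ref{lem:gen_compl} shows that $\sum_{C\in\mathscr{C}(T)}\theta_C^{T}$ is generated by the $\theta_i$, so among the $|\mathscr{C}(T)|$ classes $[\theta_C^{T}]$ there is at most the single complement relation. I must then show that there are no further relations, so that these classes span a space of dimension exactly $|\mathscr{C}(T)|-1$, and that the spaces attached to distinct minimal separators of size $m$ are mutually independent. This is what Lemma \ref{lem:non_min_gen} delivers: a derivation $\theta_C^{T}$ attached to a minimal separator $T$ can never be produced from separator-based derivations of other (necessarily strictly smaller) minimal separators in the manner of Lemma \ref{lem:poset_gen}, and in a poset-based generating set those are the only generation relations available. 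Summing the surviving $|\mathscr{C}(T)|-1$ independent classes over all $T\in\mathscr{S}_m$ yields the second summand.

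The step I expect to be the main obstacle is the range $\kappa(G)<m\le t_{\max}$, where the clean comparison with the free braid module is unavailable and where the complement relation reads $\sum_{C}[\theta_C^{T}]=c_T\,[\theta_m]$ rather than $\sum_{C}[\theta_C^{T}]=0$; here I must control the class $[\theta_m]$, which may be nonzero precisely because $\theta_m$ is generated in part through size-$m$ separators, and rule out that this coupling drops the rank of the separator classes below $\sum_{T\in\mathscr{S}_m}(|\mathscr{C}(T)|-1)$. I would resolve this by computing the invariant count inside a minimal generating set arising from a complete augmented $\mathscr{P}(G)$-poset (Theorem \ref{generating_corollary}): there every generation is of the form given in Lemma \ref{lem:poset_gen}, so the number of surviving degree-$m$ generators can be read off the poset after deleting one component per separator via Lemma \ref{lem:no_chains}, while Lemma \ref{lem:non_min_gen} guarantees that nothing further collapses.
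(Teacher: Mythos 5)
Your proposal is correct and, at its core, takes the same route as the paper: the count of $\vert\mathscr{C}(T)\vert-1$ degree-$\vert T\vert$ generators per minimal separator $T$ rests on exactly the paper's ingredients --- the complement elimination of Example \ref{lem:gen_compl} together with the non-generation statements of Lemmas \ref{lem:poset_gen} and \ref{lem:non_min_gen}, applied inside a minimal generating set coming from a complete augmented poset --- while the initial segment $0,1,\dots,\kappa(G)$ comes from the derivations $\theta_0,\dots,\theta_{\kappa(G)}$, with the minimal-cardinality separator accounting for the boundary degree $\kappa(G)$ just as in the paper. Your graded-Nakayama framing via $\dim_{\mathbb{K}}\bigl(D(\A(G))/\mathfrak{m}D(\A(G))\bigr)_m$, the exact sequence at degree $\kappa(G)$ built on Proposition \ref{connectedprop} and the freeness of the braid module, and the observation that each $\theta_C^T$ lies outside $D(\A(K_\ell))$ are a sound and welcome rigorization of what the paper compresses into the remark that the derivation degree sequence is unique, but they do not change the underlying argument.
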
 
		\begin{proof} 
			The first part of the sequence being contained in the degree sequence follows directly from the definition and corresponds to the degrees of the $\theta_i$. The second part follows from Lemmas \ref{lem:poset_gen} (every minimal separator necessarily appears in the poset) and \ref{lem:gen_compl}: we eliminate one derivation $\theta_C^T$ for each minimal separator $T$, except for one of minimal cardinality which explains the additional $\kappa(G)$ in the subsequence. 
		\end{proof} 
		\begin{corollary} 
			Let $G$ be a graph, then  $d \geq t_{\max}$. 
		\end{corollary} 
		\begin{proof} 
			This follows immediately from Proposition \ref{prop:subsequ}, since  per definition $G\setminus T$ has at least two connected components for every separator $T$. 
		\end{proof} 
		\begin{remark} 
			This is an improvement of a result in \cite{wakefield}, where the lower bound of $\text{Tri}(G)$ was found, which is defined as the maximal number of new triangles that can be made from adding an edge to $G$: 
			
			Adding an edge $\{v,w\}$ to $G$ implies that $v$ and $w$ previously were not connected through an edge, which in turn means there is a minimal $(v,w)$-separator in $G$. A vertex directly connected to both of them would have to be contained in any minimal $(v,w)$-separator and thus, its cardinality would at least be the number of triangles formed by adding the edge $\{v,w\}$. 
			
			However, this bound is not  sharp, since there could be paths of length more than 2 between $v$ and $w$ and thus the cardinality of the separator larger. 
		\end{remark} 
		\begin{proposition} 
			Let $c$ be the maximal cardinality of a clique in $G$. Then $d \geq c-1$. 
		\end{proposition}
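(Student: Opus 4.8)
The plan is to compare $D(\A(G))$ with the localization of $\A(G)$ at the flat cut out by a maximal clique, where the local picture is a braid arrangement whose top exponent is exactly $c-1$. Let $K \subseteq V(G)$ be a clique with $|K| = c$, and let $X = \bigcap_{i,j \in K} \ker(x_i - x_j)$ be the corresponding flat. I would first identify the localization $\A_X := \{H \in \A(G) : X \subseteq H\}$: a difference $x_a - x_b$ vanishes identically on $X$ if and only if both $a,b \in K$, so $\A_X$ consists precisely of the hyperplanes $\ker(x_i - x_j)$ with $i,j \in K$. Thus $\A_X$ is the braid arrangement on the coordinates indexed by $K$, viewed inside $\mathbb{K}^\ell$.

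Next I would pin down the free structure of $D(\A_X)$. Using Saito's criterion (Theorem \ref{thm:saito}), the derivations $\{\partial_{x_m} : m \notin K\} \cup \{\theta_0^K, \dots, \theta_{c-1}^K\}$, where $\theta_j^K = \sum_{i \in K} x_i^j \, \partial_{x_i}$, form a basis of $D(\A_X)$: the coefficient matrix is block diagonal with an identity block on the coordinates outside $K$ and a Vandermonde block on $K$ whose determinant equals $Q(\A_X)$ up to a scalar. Hence $D(\A_X)$ is free with top generator $\theta_{c-1}^K$ of polynomial degree $c-1$, and the submodule $M := S\langle \partial_{x_m}\ (m \notin K),\ \theta_0^K, \dots, \theta_{c-2}^K\rangle$ is a free direct summand of rank $\ell - 1$ with $D(\A_X) = M \oplus S\,\theta_{c-1}^K$.

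The key step is a degree-and-rank argument. Suppose for contradiction that every element of a homogeneous minimal generating set of $D(\A(G))$ has polynomial degree at most $c-2$; since the degree sequence is well-defined (the generators being homogeneous), this is exactly $d \le c-2$. Because $\A_X \subseteq \A(G)$ we have $D(\A(G)) \subseteq D(\A_X)$, and any homogeneous element of $D(\A_X)$ of degree $\le c-2$ has zero coefficient on $\theta_{c-1}^K$ (that coefficient would be homogeneous of negative degree), hence lies in $M$. Thus all generators, and so all of $D(\A(G))$, lie in $M$. I would then invoke the standard locality of the derivation module at the prime $\mathfrak{p}_X$ of $X$: since $\alpha_H$ is a unit in $S_{\mathfrak{p}_X}$ for every $H \in \A(G) \setminus \A_X$, localization commutes with the finite intersection defining $D$ and gives $D(\A(G))_{\mathfrak{p}_X} = D(\A_X)_{\mathfrak{p}_X}$. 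Localizing the inclusion $D(\A(G)) \subseteq M \subsetneq D(\A_X)$ yields $D(\A_X)_{\mathfrak{p}_X} = D(\A(G))_{\mathfrak{p}_X} \subseteq M_{\mathfrak{p}_X}$, which is impossible because $M_{\mathfrak{p}_X}$ is free of rank $\ell-1$ while $D(\A_X)_{\mathfrak{p}_X}$ is free of rank $\ell$ (the summand $S_{\mathfrak{p}_X}\,\theta_{c-1}^K$ is nonzero). This contradiction forces $d \ge c-1$.

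The main obstacle is the passage from the local statement back to the global degree bound: the inclusion $D(\A(G)) \subseteq D(\A_X)$ runs in the wrong direction for bounding generator degrees from below, and it is only the \emph{equality} of the two modules after localizing at $\mathfrak{p}_X$ that converts the top exponent $c-1$ of the local braid arrangement into a forced minimal generator of $D(\A(G))$. Establishing this locality (together with the fact that localization commutes with the defining intersection) is the technical heart; by contrast, the Saito-criterion computation for $D(\A_X)$ and the degree bookkeeping are routine.
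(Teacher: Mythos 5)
Your proof is correct, but it takes a genuinely different route from the paper. The paper's own argument stays entirely inside its separator framework: it places the maximal clique $\mathcal{C}$ inside $T\cup C$ for a minimal separator $T$ of minimal cardinality, follows a descending chain $(T_i,C_i)$ until the chain first consumes a clique vertex, observes that the next separator $T_{i+1}$ must then contain the remaining $c-1$ clique vertices (so that chain element has degree at least $c-1$), and finally argues separately that this element is not generated by lower-degree separator derivations via Lemma \ref{lem:poset_gen} and that not all such elements are removed by the complement mechanism of Example \ref{lem:gen_compl}. You instead localize at the flat $X$ of the clique: $\A_X$ is a braid arrangement with top exponent $c-1$; every homogeneous element of $D(\A(G))\subseteq D(\A_X)$ of degree at most $c-2$ has vanishing coefficient on $\theta_{c-1}^K$ and hence lies in the free rank-$(\ell-1)$ submodule $M$; and the identity $D(\A(G))_{\mathfrak{p}_X}=D(\A_X)_{\mathfrak{p}_X}$ (which holds exactly as you say, because $\alpha_H$ is a unit in $S_{\mathfrak{p}_X}$ for $H\notin\A_X$ and localization commutes with the finite intersection defining the derivation module) would force a free module of rank $\ell$ inside one of rank $\ell-1$, which is impossible after passing to the fraction field. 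Your route buys several things: it is shorter; it sidesteps the delicate ``not already generated by smaller degrees'' bookkeeping, which is arguably the weakest step in the paper's proof; it applies to any homogeneous generating set, not only minimal ones; and it generalizes verbatim to show $d\geq\max\exp(\A_X)$ for every flat $X$ with free localization, for arbitrary (not just graphic) arrangements. What the paper's route buys in exchange is structural information: it locates concretely where in the separator poset the degree-$(c-1)$ generator lives, which is reused in the surrounding analysis of the derivation degree sequence, whereas your localization lemma, though classical, lies outside the paper's toolkit and would need to be imported (e.g.\ from the standard local-freeness literature) with the routine proof you sketch.
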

		\begin{proof} 
			Let $\mathcal{C}$ be a clique of maximal cardinality in $G$. 
			For each minimal separator of minimal cardinality $T$, it holds that $\mathcal{C} \subseteq T \cup C$ for some $C \in \mathscr{C}(T)$. Chose any descending chain $\mathscr{D}(T,C)$ and let $i$ be the index of the first element of $\mathcal{C}$ in the chain (meaning $C_i$ does not contain any element of $\mathcal{C}$). Then $\vert T_{i+1} \vert \geq c-1$, since this component contains exactly one element of the clique and so the separator needs to contain all other $c-1$ elements of it. 
			
			What remains to show is that $T_{i+1}$ is not already generated by derivations with degree smaller than $c-1$. The separators corresponding to these derivations would have to be contained in $T_{i-1}$, thus could not contain the whole of $\mathcal{C}$, so the component would not be contained in $C$ or $\mathscr{D}{(T,C)}_{i}$ would have to be contained in the separator. In both cases, generation through Lemma \ref{lem:poset_gen} would not be possible. 
			
			Furthermore, since there is a derivation with degree at least $c-1$ for each minimal separator of minimal cardinality in $G$, not all of them are removed in the next step with Lemma \ref{lem:gen_compl}. 
		\end{proof}  
		We can also give an upper bound for $d$: 
		
		\begin{lemma} 
			Let $G$ be a graph, then $d \leq \Delta(G)$. 
		\end{lemma}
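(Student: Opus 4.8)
The plan is to prove the statement by exhibiting a generating set of $D(\A(G))$ all of whose elements have polynomial degree at most $\Delta(G)$. Since the generators are homogeneous the derivation degree sequence is unique, so producing one generating set concentrated in degrees $\le \Delta(G)$ forces $d\le\Delta(G)$. By Theorem~\ref{generating_corollary} I may take the generating set to be $\theta_0,\dots,\theta_{\kappa}$ together with a descending chain for every pair $(T,C)$ with $T\in\mathscr{S}(G)$ and $C\in\mathscr{C}(T)$. The diagonal part is immediate: $\operatorname{pdeg}(\theta_i)=i\le\kappa(G)$, and since connectivity is bounded by the minimum degree we have $\kappa(G)\le\delta(G)\le\Delta(G)$. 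Thus the entire problem reduces to bounding the degrees of the separator-based generators $\theta^{T_i}_{C_i}$.

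Recall from Definition~\ref{separatorchain} that $\operatorname{pdeg}(\theta^{T_i}_{C_i})=|T_i|=|N(C_i)\setminus C_i|$, the size of the boundary of the partial component $C_i$. First I would record the single-vertex case: if $C=\{v\}$, then $T=N(v)$ and $\theta^{T}_{\{v\}}=\sigma_v$ is precisely the neighbourhood derivation of Example~\ref{ex:nui}, of degree $\deg(v)\le\Delta(G)$. I would then always construct the descending chain by \emph{connected growth}, ordering the vertices of $C$ so that each $C_i$ is connected; the first chain element is then the neighbourhood derivation $\sigma_{v_1}$, and at each subsequent step the change of the boundary $T_i$ is governed only by the adjacencies of the single new vertex $v_{i+1}$ (one vertex enters $C_{i+1}$ and leaves the boundary, while its external neighbours enter it). This makes the degree increments trackable vertex by vertex.

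The genuinely delicate point is that the \emph{top} element $\theta^{T}_{C}$ of a chain has degree $|T|$, which can exceed $\Delta(G)$. To absorb such terms I would use the complement construction of Example~\ref{lem:gen_compl} together with the identity $\sum_{C\in\mathscr{C}(T)}\theta^{T}_{C}=\sum_{j\in V(G)\setminus T}\prod_{t\in T}(x_j-x_t)\,D_j$: expanding $\prod_{t\in T}(x_j-x_t)$ in the elementary symmetric functions of $\{x_t\}_{t\in T}$ exhibits this sum as an $S$-linear combination of $\theta_0,\dots,\theta_{|T|}$, each of which is already generated in degree $\le\Delta(G)$ by Corollary~\ref{cor:thetai} (the chains of a separator of minimal cardinality $\kappa+1\le\Delta(G)$ suffice to produce every $\theta_m$). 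Consequently, modulo generators of degree $\le\Delta(G)$, one may trade the high-degree derivation $\theta^{T}_{C}$ against the derivations of the other components of $\mathscr{C}(T)$, and I would then argue by induction on $|C|$ that any separator contribution which is not already removable through the partition mechanism of Lemma~\ref{lem:poset_gen} can be represented by chain elements of degree at most $\max_{v}\deg(v)=\Delta(G)$.

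The hard part will be exactly this final reduction: showing that no minimal generator of degree strictly larger than $\Delta(G)$ is forced, i.e.\ that whenever $|T_i|>\Delta(G)$ the derivation $\theta^{T_i}_{C_i}$ is in fact redundant. The boundary $N(C_i)\setminus C_i$ can only exceed every individual vertex degree when the component $C_i$ is spread across many separator vertices, which is precisely the configuration in which the complement relation above and Lemma~\ref{lem:poset_gen} should allow one to rewrite $\theta^{T_i}_{C_i}$ through lower-degree pieces. Turning this heuristic into a quantitative statement — pinning the maximal degree exactly at $\Delta(G)$ rather than merely at $t_{\max}$ — is the crux of the argument and the step I would expect to demand the most care.
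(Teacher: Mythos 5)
Your proposal is a plan rather than a proof, and the gap sits exactly where the lemma lives. The easy reductions are fine: $\mathrm{pdeg}(\theta_i)=i\le\kappa(G)\le\delta(G)\le\Delta(G)$, $\mathrm{pdeg}(\theta_{C_i}^{T_i})=|T_i|$, and the identification $\theta_{\{v\}}^{N(v)}=\sigma_v$ from Example~\ref{ex:nui} are all correct. But the decisive step --- that no generator of degree exceeding $\Delta(G)$ is forced --- is explicitly deferred in your own text (``the crux \dots\ demand the most care''), and the absorption mechanism you sketch cannot supply it. The complement identity of Example~\ref{lem:gen_compl} trades $\theta_C^T$ for $\theta_{\overline{C_T}}^T$, but both carry the coefficient $\prod_{t\in T}(x_j-x_t)$, so the degree $|T|$ is untouched: for a minimal separator $T$ with two components you still retain one generator of degree $|T|$. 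Worse, the paper's own Lemma~\ref{lem:non_min_gen} says precisely that for $T\in\mathscr{S}(G)$ the derivation $\theta_C^T$ is \emph{never} generated via the partition mechanism of Lemma~\ref{lem:poset_gen} from other minimal separators, and Proposition~\ref{prop:subsequ} then forces $|\mathscr{C}(T)|-1$ entries of degree $|T|$ into the degree sequence for every minimal separator $T$. So your redundancy programme silently reduces to the unproved claim $t_{\max}\le\Delta(G)$, which you never address and which does not follow from anything you cite: a vertex of a minimal $(a,b)$-separator is only guaranteed one neighbour in each of the two relevant components, so $|T|$ is a priori bounded by edge counts into a component, not by any single vertex degree. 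Concretely, take two disjoint paths $a_1\cdots a_k$ and $b_1\cdots b_k$ together with vertices $t_1,\dots,t_k$ where $t_i$ is adjacent exactly to $a_i$ and $b_i$; then $\{t_1,\dots,t_k\}$ is a minimal $(a_1,b_1)$-separator of cardinality $k$ while $\Delta(G)=3$, and no proper subset of it is a separator, so within the paper's framework nothing of lower degree can replace it --- your ``whenever $|T_i|>\Delta(G)$ the derivation is redundant'' heuristic fails outright there. (This example in fact puts tension on the stated bound itself, via the corollary $d\ge t_{\max}$ following Proposition~\ref{prop:subsequ}, which a careful write-up would need to confront.)

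The paper's proof takes the opposite route and this is the structural difference worth noting: it never attempts to generate high-degree elements away, but bounds the degree of \emph{every} generator directly, asserting $|T|\le\min\{\deg(a),\deg(b)\}$ for a minimal $(a,b)$-separator and then controlling consecutive members of a descending chain by an adjacency argument about edges in the poset $\mathscr{P}(G)$. Your intermediate chain elements have the same unaddressed problem: under ``connected growth'' the set $T_{i+1}$ loses one vertex but gains up to $\deg(v_{i+1})-1$ new ones, so tracking increments vertex by vertex does not prevent $|T_i|$ from growing far past $\Delta(G)$; a correct argument must bound each $|T_i|$ itself, which is what the paper's poset-edge analysis is for and what your plan replaces with an unproven --- and, by Lemma~\ref{lem:non_min_gen}, unworkable --- redundancy claim.
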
 
		\begin{proof} 
			Since the generating set only uses separator-based derivations and the $\theta_i$, this follows immediately by construction: Since $\kappa(G)$ is the connectivity of $G$, it holds that $\kappa(G) \leq \delta(G)$. 
			
			Now consider an descending chain between two elements $(T,C), (T',C'), C \supset C'$ connected by an edge in the poset. Since $T, T' \in \mathscr{S}(G)$, it follows that they separate elements $a,b$ and $a', b'$ respectively. It follows that $\vert T \vert \leq \min\{\text{deg}(a), \text{deg}(b)\}$ and the analogue for $T'$. 
			
			If $C\setminus C'$ is a clique and all edges $(t,c), t \in T, c \in C\setminus C'$, we are finished. If there exist a vertex $c\in C\setminus C'$ and some vertex $c' \in T \cup C\setminus C'$ for which $\{c, c'\} \notin E(G)$, this means that there exists a minimal $(c,c')$-separator contained in $T \cup C\setminus C'$ and thus, there cannot be an edge between $(T,C), (T',C')$ in $\mathscr{P}(G)$. \qedhere

		\end{proof} 
		
		\begin{example} 
			The degree bounds are all non-strict, as we can see in Figure \ref{fig:degree}. 
			\begin{figure}[H]
				\includegraphics[width=0.45\textwidth]{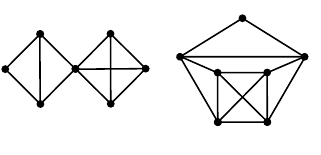} 
				\caption{Two example graphs on 7 vertices. Both of them have 4 as maximal cardinality of a clique (meaning $c-1 = 3$) and highest vertex degree 4. The left graph has $ t_{\max} = 2$ and $d = 3$. The right one has $ t_{\max} = 3$ and $d = 4$.} 
				\label{fig:degree}
			\end{figure}
		\end{example} 
	
	\subsection*{Further problems}
	As mentioned in the introduction, this project was inspired by the task of finding a graph theoretical characterization of the projective dimension of graphic arrangements. In \autocite{abe2023projective}, the case of projective dimension 1 was explored and this explicit set of generators could be used as a tool to find a general characterization. 
	
	Another line of inquiry could be to explore new concepts and definitions for hyperplane arrangements in the graphic case, especially in the case of freeness subclasses, such as MAT-free, accurate or flag-accurate arrangements (see \cite{muecksch2023flagaccurate}, \cite{mücksch}) or classes depending on generator degrees (such as strictly plus one generated arrangements \autocite{MR4276319}). 
	
	Lastly, it is an open question whether the concepts used in this paper could be generalized to larger arrangement classes to get generating sets. 
	\printbibliography
	
\end{document}